\pdfoutput=1
\documentclass{amsart}
\usepackage[T1]{fontenc}   %%%%%%%Please do not change
\usepackage{graphicx}                  %%%%%%%

%%%%%%%%%%%%%%%%%%%%%%%%%%%%%%%%%%%%%%%%%%%%%%
%%%%%%%%%%%%%%%%%%%%%%%%%%%%%%%%%%%%%%%%%%%%%%
% Please do not change this paragraph. %%%%%%%
\setcounter{page}{1}                   %%%%%%%
\setlength{\textwidth}{4.4in}          %%%%%%%
\setlength{\textheight}{7.0in}         %%%%%%%
\setlength{\evensidemargin}{1in}       %%%%%%%
\setlength{\oddsidemargin}{1in}        %%%%%%%
\setlength{\topmargin}{.8in}           %%%%%%%
%%%%%%%%%%%%%%%%%%%%%%%%%%%%%%%%%%%%%%%%%%%%%%
%%%%%%%%%%%%%%%%%%%%%%%%%%%%%%%%%%%%%%%%%%%%%%
% Please do not change the page size   %%%%%%%
% and do not redefine the baselineskip.%%%%%%%
%%%%%%%%%%%%%%%%%%%%%%%%%%%%%%%%%%%%%%%%%%%%%%
%%%%%%%%%%%%%%%%%%%%%%%%%%%%%%%%%%%%%%%%%%%%%%

\newtheorem{theorem}{Theorem}[section]
\newtheorem{lemma}[theorem]{Lemma}
\newtheorem{proposition}[theorem]{Proposition}

\newtheorem{conjecture}[theorem]{Conjecture}

\theoremstyle{definition}
\newtheorem{definition}[theorem]{Definition}
\newtheorem{notation}[theorem]{Notation}

\newtheorem{remark}[theorem]{Remark}
\numberwithin{equation}{section}

\usepackage{amsthm}
\usepackage{amssymb}
\usepackage{tabularx}

\begin{document}

%%%%%%%%%%%%%%%%%%%%%%%%%%%%%%%%%%%%%%%%%%%%%%%%%%%%%%%%%%%%
%%%%%%%%%%%%%%%%%%%%%%%%%%%%%%%%%%%%%%%%%%%%%%%%%%%%%%%%%%%%
% This a placeholder for the TOPLOGY PROCEEDINGS logo %%%%%%
\noindent                                             %%%%%%
\begin{picture}(150,36)                               %%%%%%
\put(5,20){\tiny{Submitted to}}                       %%%%%%
\put(5,7){\textbf{Topology Proceedings}}              %%%%%%
\put(0,0){\framebox(140,34){}}                        %%%%%%
\put(2,2){\framebox(136,30){}}                        %%%%%%
\end{picture}                                        %%%%%%
%%%%%%%%%%%%%%%%%%%%%%%%%%%%%%%%%%%%%%%%%%%%%%%%%%%%%%%%%%%%
%%%%%%%%%%%%%%%%%%%%%%%%%%%%%%%%%%%%%%%%%%%%%%%%%%%%%%%%%%%%
\vspace{0.5in}

\renewcommand{\sc}{\scshape}
%insert defs/styles
\vspace{0.5in}
%%%%%%%%%%%%%%%%%%%
%\usepackage{graphicx}

\begin{abstract}
This paper provides the complete table of prime knot projections with their mirror images, without redundancy,  up to eight double points systematically thorough a finite procedure by flypes.  
In this paper, we show how to tabulate the knot projections up to eight double points by listing tangles with at most four double points by an approach with respect to rational tangles of J.~H.~Conway.  In other words, for a given prime knot projection of an alternating knot, we show how to enumerate possible projections of the alternating knot. 
Also to tabulate knot projections up to ambient isotopy, we introduce arrow diagrams (oriented Gauss diagrams) of knot projections having no over/under information of each crossing, which were originally
introduced as arrow diagrams of knot diagrams by M.~Polyak and O.~Viro.  Each arrow diagram of a knot projection completely detects the difference between the knot projection and its mirror image.   
\end{abstract}       
%%%%%%%%%%%%%%

\author{Noboru Ito}
\address{The University of Tokyo, \\
3-8-1, Komaba, Meguro-ku, Tokyo 153-8914, Japan}
\email{noboru@ms.u-tokyo.ac.jp}
\author{Yusuke Takimura}
\address{Gakushuin Boys' Junior High School \\
1-5-1, Mejiro, Toshima-ku, Tokyo 171-0031, Japan}
\email{Yusuke.Takimura@gakushuin.ac.jp}
\title[A tabulation of knot projections]{The tabulation of prime knot projections with their mirror images up to eight double points}
\keywords{knot projection; tabulation; flype\\
MSC 2010: 57M25, 57Q35}
\maketitle
\section{\textbf Introduction}
Arnold (\cite[Figure~53]{AA}, \cite[Figure~15]{A_book}) obtained a table of reduced knot projections (equivalently, reduced generic immersed spherical curves) up to seven double points.    
In Arnold's table, the number of prime knot projections with seven double points is six.  However, this table is incomplete (see Figure~\ref{arnold}, which equals \cite[Figure~53]{AA}). 
\begin{figure}[h!]
\centering
\includegraphics[width=11cm]{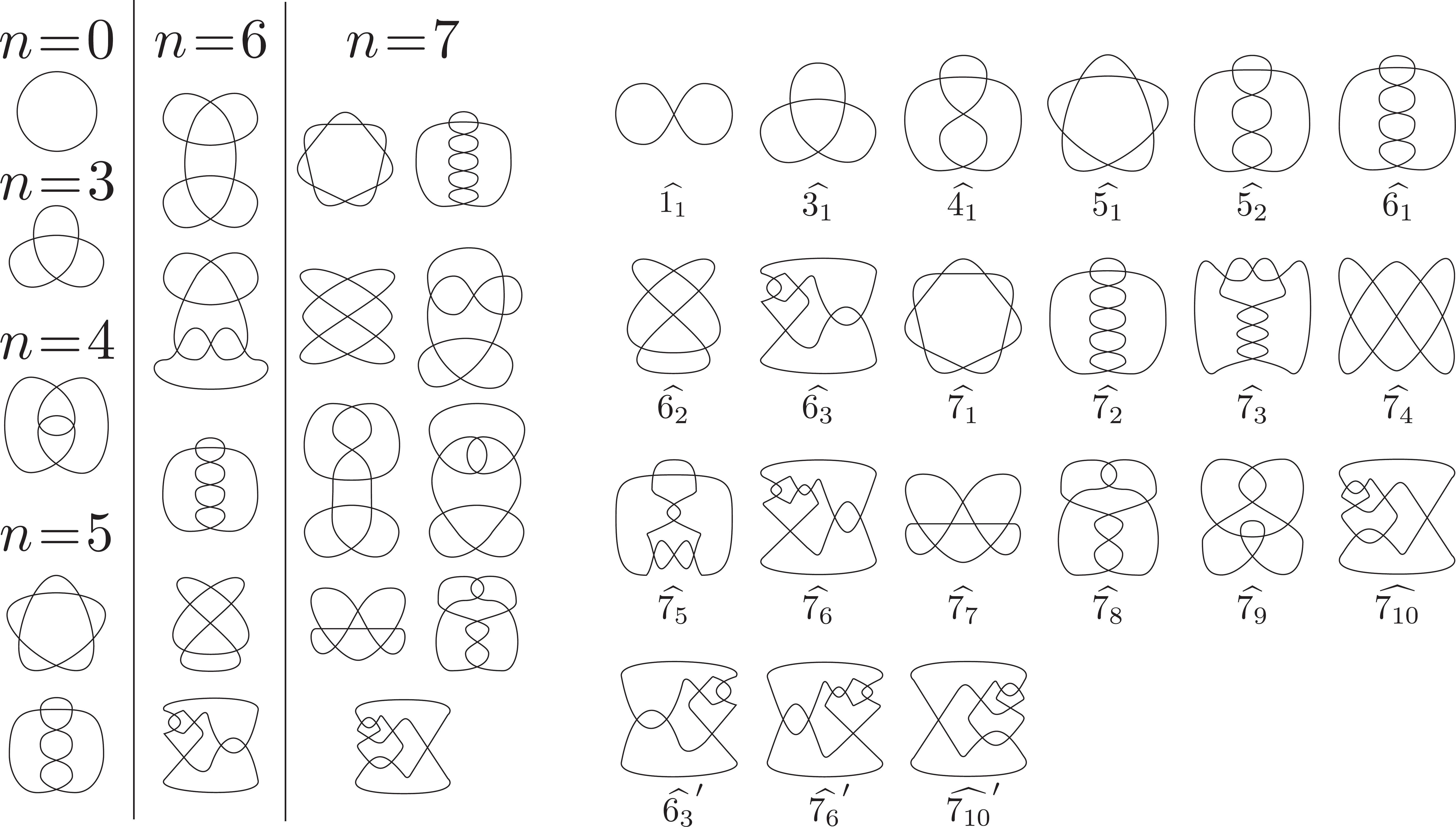}
\caption{Arnold's table  \cite[Figure~53]{AA} of knot projections with $n$ double points (left) and our table of prime knot projections up to seven double points (right).  The prime knot projections $\widehat{7_3}$, $\widehat{7_5}$, $\widehat{7_6}$, and $\widehat{7_9}$ are missing in Arnold's table.  }
\label{arnold}
\end{figure}
  
Nowadays, Arnold's table had been completed, e.g., by \cite{CHLR} that is a table, obtained by Gauss diagrams, up to ten double points.  However, the authors have not been able to find any table of knot projections with their mirror images (see Figure~\ref{arnold}).   
In this paper, we systematically construct the complete table of prime knot projections with their mirror images up to eight double points by flypes.  We tabulate knot projections using flypes in a way obeyed by an approach of Conway \cite{C} who studies rational tangles.  This paper provides the complete table of prime knot projections with their mirror images, without redundancy,  up to eight double points systematically thorough a finite procedure by flypes.  
In this paper, we show how to tabulate the knot projections up to eight double points by listing tangles with at most four double points by an approach of Conway.  
Also to tabulate knot projections up to ambient isotopy, we introduce arrow diagrams (oriented Gauss diagrams) of knot projections having no over/under information of each crossing, which were originally
introduced as arrow diagrams (oriented Gauss diagrams) of knot diagrams by Polyak and Viro \cite{PV}.  An arrow diagram (oriented Gauss diagram) completely detects the difference between a knot projection and its mirror image (Proposition~\ref{prop1}).  
%Our way is essentially obeyed by an approach of Conway \cite{C} who studies algebraic tangles.  

%One place where we may differ from accepted terminology is in our usage of the words \emph{double point} and \emph{crossing}.  
In this paper, by a \emph{double point} we shall mean a transverse double point of a knot projection and by a \emph{crossing} we shall mean a double point with over/under information of a projection of a knot (Definition~\ref{def_reduced}). 

In \cite{A_book}, Arnold introduced the notion of a \emph{reducible} knot projection and he wrote: 

\vspace{3mm}
\emph{``Many of these irreducible curves are ``combinatorics" of simpler curves.  For instance, the first two curves with six crossings are two different combinations of two trefoil curves.  However, I do not know any formal theory describing such combinations."} 
\vspace{3mm}

In fact, Arnold's theory did not suggest notions of a \emph{prime} knot projection and a \emph{connected sum}.  
The notion describing ``such combination" by Arnold corresponds to the notion of connected sums, which is defined in this paper in a standard manner (Definition~\ref{connected}).  Every knot projection is one of prime knot projections or a connected sum of some prime knot projections.  The primeness is also defined in this paper (Definition~\ref{prime}).  
Arnold obtained a table of knot projections \cite{AA, A_book}.  He did not describe how to tabulate the knot projections.  
Dowker and Thistlewaite \cite{DT} explained an algorithm that in principal could generate all possible knot projections up to any crossing number.  Carrying out this algorithm depends on available computer power, and using this approach, the current knot table of prime knots up to 16 crossings has been assembled \cite{HTW}.    
One of the prior existing classical  knot tables of prime knots up to 10 crossings can be found in a book by Rolfsen \cite{R}.   

Here, we mention the difference between tabulations for knots and knot projections.  On one hand, if you would like to make a knot table with minimal $n$ crossings, you may apply the Dowker-Thistlewaite algorithm, arrange the over/under information in the all $2^n$ possible ways, and detect different pairs by using knot invariants.  On the other hand, if you would like to make a table of knot projections with minimal $n$ double points, first, enumerate alternating knots because it is known that an (reduced) alternating knot diagram of a given knot has the minimum number of crossings (Tait's conjecture \cite{murasugi}).  Second, it is well known that every knot projection uniquely determines (up to mirror symmetry) an alternating knot.  However, the set of knot projections with $n$ crossings is larger than the set of alternating knots with minimal $n$ crossings: all alternating knot diagrams obtained from a given one by a series of flypes correspond to the same knot.   
  
In this paper, through the use of flyping, we propose a systematic tabulation of prime knot projections and give the table of prime knot projections up to eight double points by flypes.   
Here, note that every nontrivial knot projection consists of two tangles.  In this paper, we show how to tabulate knot projections up to eight double points by using tangles with at most four double points.  We expect to extend our approach to a general case later.

As described above, our tabulation approach is different from other approaches.  
%Most papers on tabulation of knots use Dowker-Thistlewaite codes \cite{DT} to encode knots, and omit to mention how to give the complete list of mirror images for given knot projections.  Also for a given knot projection of an alternating knot, an approach to obtain the other projections of the knot by using  tangles with the smaller number of crossings is missing.   
Our tabulation approach is basically obeyed by the approach of Conway\cite{C} (for rational tangles) and is a method for drawing the knot projection (thus, there is no need with verification of the realizability of a given code).  
For tabulating tangles with over/under information, cascade diagrams were used in \cite{B} and graphs were used in \cite{K}.   These methods are also different from the one proposed in our paper because the two methods do not use flype theory, which is what our approach is based on.  In particular, for a given knot projection of an alternating knot, we show how to obtain the other projections of the knot by using tangles with the smaller number of crossings.  Recently, Harrison has assembled a table of four regular graphs up to 10 double points \cite{CH}.  
%However, their approach is not based on tangles.   Therefore, this study is novel as it tabulates knot projections consisting of tangles using flype theory from a knot table in a systematical manner.   Finally, we introduce an {arrow diagram obtained from a knot projection} though the knot projection has no over/under information of crossings.  By using them, we show how to give the complete list of mirror images for given knot projections, and we gave mirror images, without redundancy, of this tabulation of knot projections.  
It is also necessary to say that Knotscape software will identify the prime knot type of any given prime knot projection with at most $16$ crossings and thus in some sense Knotscape contains all knot projections of these knots.  While Knotscape does not handle composites directly, the methods in Knotscape can deal with composites and their diagrams up to $16$ crossings just fine.     

The novel approach in this paper is to use tangles and flypes in a systematical manner to tabulate knot projections.  Finally, we introduce an arrow diagram obtained from a knot projection that allow us to construct a complete list of mirror images for a given set of knot projections.  
%%%%%%%%%%%%%%%

\section{\textbf Preliminaries}
\begin{definition}[knot, knot projection, knot diagram]\label{def_reduced}
A \emph{knot} is the image of a smooth embedding from $S^1$ to $\mathbb{R}^3$.      
A \emph{knot projection} is the image of generic immersion into an oriented $2$-sphere.  Each self-intersection is a transverse double point.  
Let $P$ be a knot projection.  The \emph{mirror image} $P'$ of $P$ is $P$ with the orientation of the $2$-sphere reversed.  Then, we say that we consider $P$ \emph{up to mirror symmetry} if we identify $P$ with $P'$ depending on situations.  
Let $P$ and $P'$ be knot projections where $P$ is ambient isotopic to $P'$.  Then we say that $P=P'$.    
A \emph{knot diagram} is a knot projection where the two paths at each double point are assigned to be the over path and the under path respectively.  A double point of a knot diagram is called a \emph{crossing}.    
%For a knot projection $P$, suppose that there exist a simple closed curve $C$ such that $C \cap P$ consists of a double point of $P$.  Then, $P$ is called a \emph{reducible} knot projection.  A knot projection $P$ that is not a reducible knot projection is called a \emph{reduced} knot projection.    
\end{definition}
\begin{definition}[tangle]
Let $T$ be the image of a generic immersion of two (one, resp.) interval(s) into $\mathbb{R} \times [0, 1]$ where the boundary points of the intervals map bijectively to the four (two, resp.) points 
\[
\{1, 2 \} \times \{ 0 \}, \{1, 2 \} \times \{ 1 \}  \qquad (\{1 \} \times \{ 0 \}, \{1 \} \times \{ 1 \},~{\textrm{resp}}.) 
\]
These four (two, resp.) points are called the \emph{endpoints} of $T$.    
If there exists an orientation-preserving embedding $\iota :$ $\mathbb{R}^2 \to S^2$, $\iota|_{T}(T)$ is called a \emph{tangle} ((1, 1)-tangle, resp.).  Then, the images of $\iota|_{\{ {\operatorname{endpoints}} \}}$ of the \emph{endpoints} of $T$ are called \emph{endpoints} of the  tangle.  
\end{definition}
By definition, there exists a sufficiently long interval $I$ ($\subset \mathbb{R}$) such that $T \subset I \times [0, 1]$ and we choose $\iota$ satisfying that $\iota|_{I \times [0, 1]} (I \times [0, 1])$ is orientation-preserving homeomorphic to a closed disk $D$.  In the rest of this paper, without loss of generality, we suppose that every tangle satisfies this condition.  The disk $D$ is called an \emph{ambient disk} of a tangle.  
\begin{definition}[connected sum of knot projections, prime tangle]\label{connected}
Let $P_1$ and $P_2$ be two knot projections.   
We choose an orientation of the ambient $2$-sphere of $P_i$ for each $i=1, 2$.  
Let $D_i$ be a $2$-disk ($\subset$ the oriented $S^2$) where the pair $(D_i, D_i \cap P_i)$ is pairwise-homeomorphic to the standard disk $({\mathbf D}^2, {\mathbf D}^1)$ for each $i=1, 2$.  Let $E_i$ be the $2$-disk satisfying $S^2$ $=$ $D_i \cup E_i$ and $D_i \cap E_i$ $=$ $\partial D_i$ $=$ $\partial E_i$ for each $i=1, 2$.  Let $S$ be a 2-sphere obtained form a disjoint copy of $E_1$ and $E_2$ by identifying $(\partial E_1, \partial E_1 \cap P_1)$ and $(\partial E_2, \partial E_2 \cap P_2)$ under an orientation reversing homeomorphism $h :$ $\partial E_1$ $\to$ $\partial E_2$ such that $h(\partial E_1)$ $=$ $\partial E_2$.  Then $(E_1 \cap P_1) \cup (E_2 \cap P_2)$ is a knot projection and is called a \emph{connected} sum of $P_1$ and $P_2$.  
A knot diagram obtained from a connected sum of two knot projections is called a connected sum of knots. 

By definition, a connected sum of two nontrivial knot projections is naturally decomposed into two (1, 1)-tangles, each of which has at least one double point.   
Let $T$ be a tangle with an ambient disk $D$.  Suppose that for any $D \subset D'$ that intersects the arcs of $T$ in a single curve $\alpha$, $\alpha$ is a simple arc.  Then $T$ is called a prime tangle.      
\end{definition}   
Classically a $2$-string tangle means either locally knotted or rational or prime.  Note that,  in our definition, we consider standard rational tangle projections as prime.  
\begin{definition}[trivial knot projection, prime knot projection]\label{prime}
Let $P$ be a knot projection.  
The knot projection with no double points is called the \emph{trivial knot projection}.  
Suppose that $P$ is not a connected sum of  nontrivial knot projections.  Then $P$ is called a \emph{prime} knot projection.   
\end{definition}
\begin{definition}[prime knot, alternating knot]
If a knot is not a connected sum of  nontrivial knots, it is called a \emph{prime knot}.  An \emph{alternating knot} is a knot with a knot diagram that has crossings that alternate between over and under as one travels around the knot in a fixed direction.   
\end{definition}  
\begin{definition}[flype of knot projections]\label{def_flype}
A \emph{flype} in a knot projection is an operation as shown in Figure~\ref{flype1}.  A flype that does not change a knot projection (up to ambient isotopy of the projection) is called a trivial flype.  A flype is called a \emph{nontrivial flype} if a flype is not a trivial flype.
The application of finitely many flypes is called \emph{flyping}.    
\begin{figure}[h!]
\includegraphics[width=4cm]{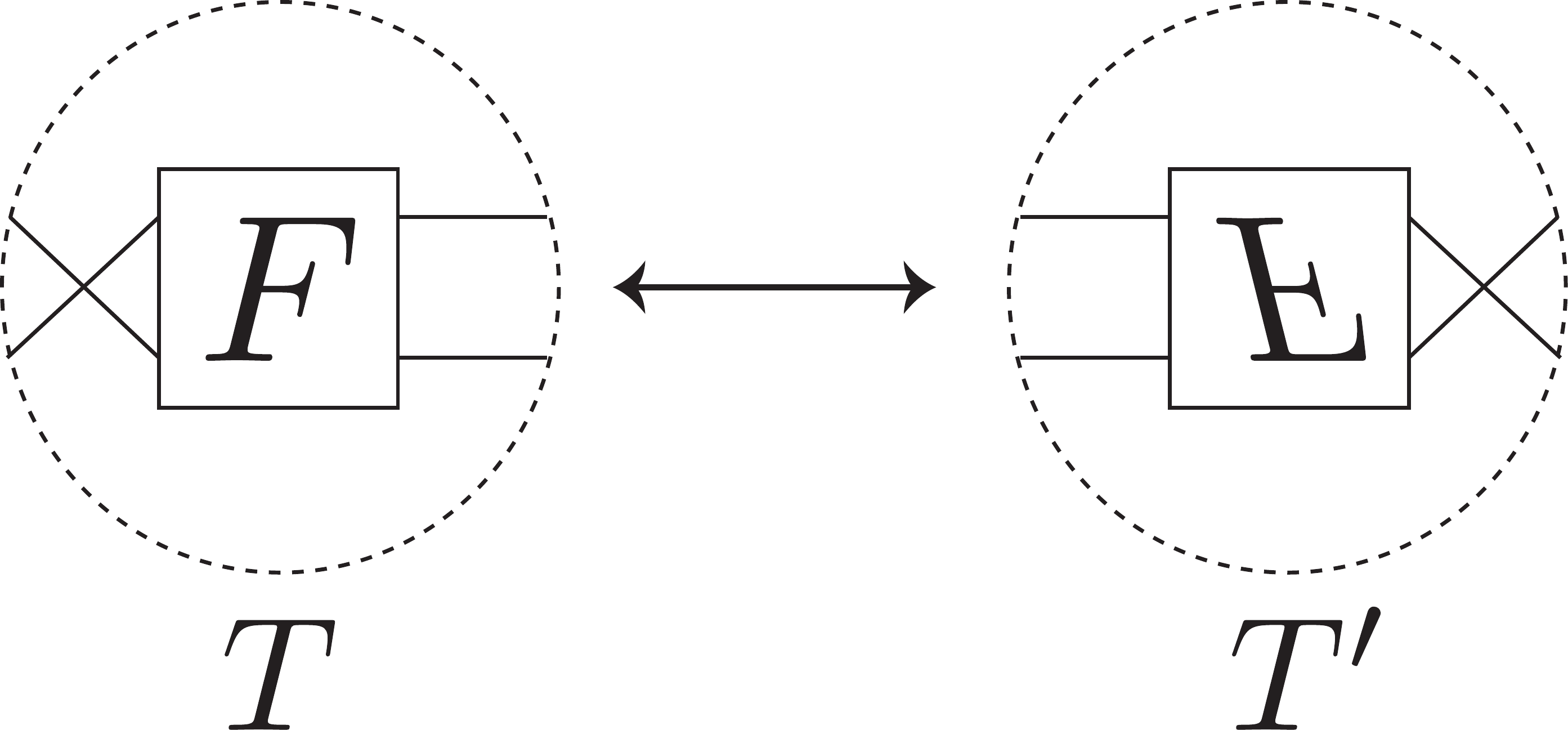}
\caption{Flype of knot projections.}\label{flype1}
\end{figure}
Suppose that we apply flyping to a knot projection $P$, and the resulting knot projection $P$ satisfies $P \neq P'$.  Then the flyping is called
 \emph{nontrivial flyping}.  
\end{definition}
A flype for knot diagrams is defined by  Figure~\ref{flype} in the same way as Definition~\ref{def_flype}. 
\begin{figure}[h!]
\includegraphics[width=8cm]{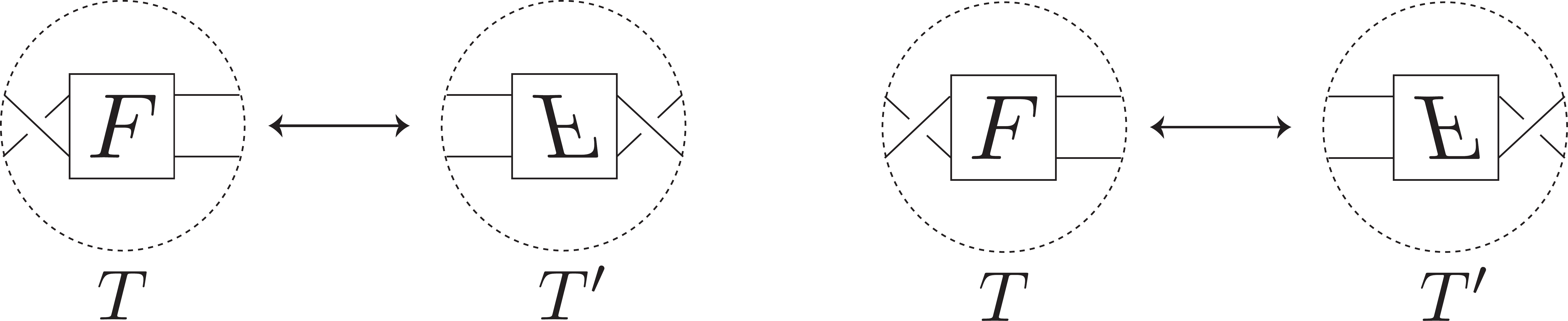}
\caption{Flype of knot diagrams.}\label{flype}
\end{figure}
\begin{notation}\label{notation1}
%By a slight abuse of a notation of knot diagrams, for tangles $A$ and $C$, the tangle Figure~\ref{N(AC)} is denoted by $N(A+C)$.
We use traditional notations $N(\cdot)$ or $N(T_1 + T_2)$ as in \cite{D} where $N(T)$ is the numerator of a tangle $T$ and for $T_1$ $+$ $T_2$, $+$ means a tangle addition.  
By a slight abuse of a notation, for flypes, we use the same notation for knot projections as that of knot diagrams, $P$ $=$ $N(A+1+B)$ and the tangle is denoted by $A+1+B$.  
Then, for every flype in a knot projection $P$, it is easy to see that $P$ is decomposed into three tangles, as shown in Figure~\ref{N(AC)}, which are denoted by $A$, $(+1)$, and $B$ from the left. 
By a slight abuse of a notation of knot diagrams, if a flype is rotating $A$ ($B$, resp.),    
this flype is called a \emph{flype of a crossing across the tangle $A$ $($$B$, resp.$)$}.  Then, we mean that we replace $1+A$ ($B+1$, resp.) with $A+1$ ($1+B$, resp.).    
\end{notation}
\begin{figure}[h!]
\includegraphics[width=4cm]{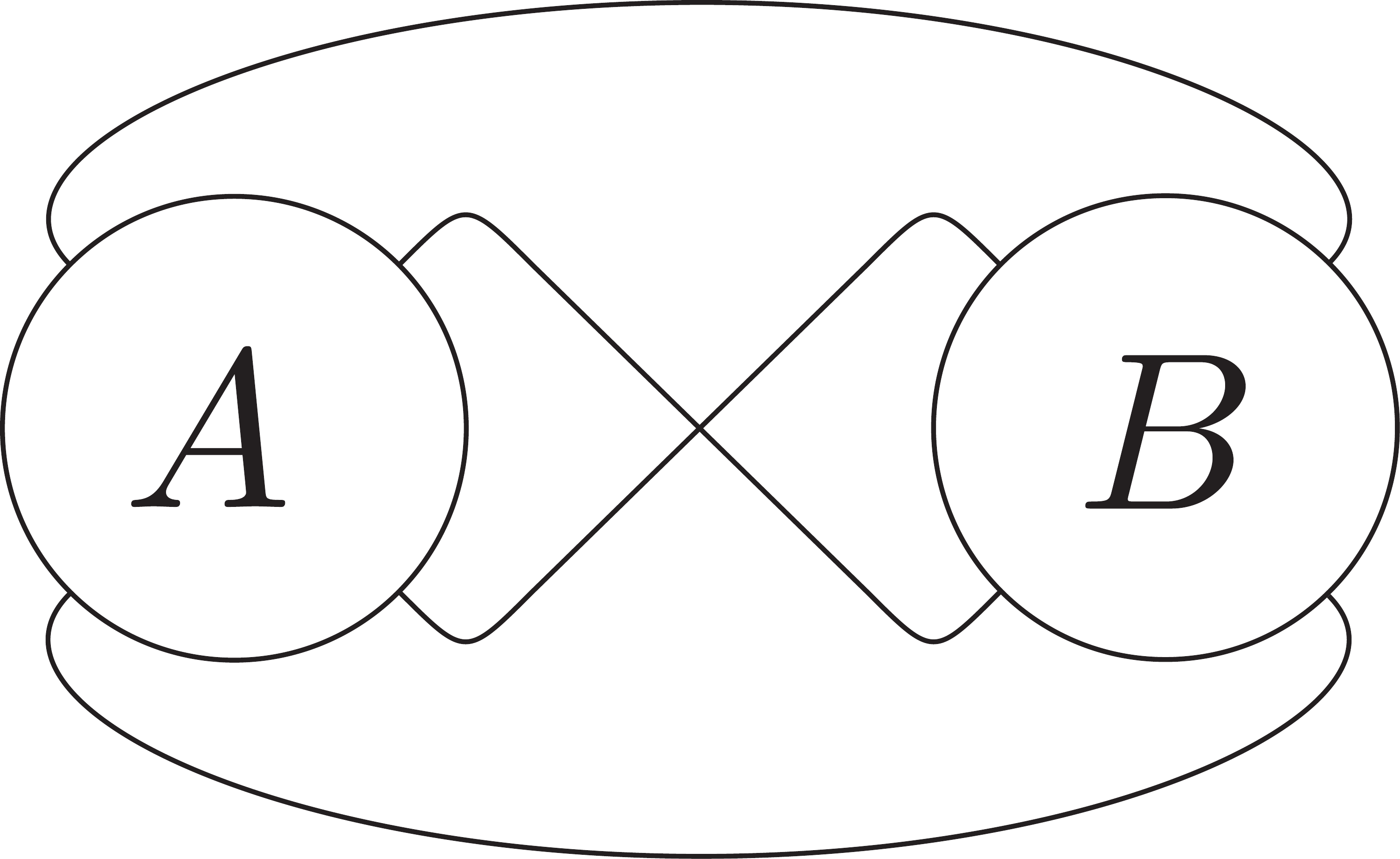}
\caption{$N(A+1+B)$.}\label{N(AC)}
\end{figure}
\begin{definition}
We use traditional terminologies as in \cite{ES} (we also see \cite{D}).   
For the standard circle parametrized by $r e^{i \theta}$, the four endpoints will be fixed at $NW$ $=$ $r e^{5 i \pi /4}$, $NE$ $=$ $r e^{i \pi /4}$, $SW$ $=$ $r e^{-5 i \pi /4}$, and $SE$ $=$ $r e^{- i \pi /4}$.  If NW and SW (NE, SE, resp.) are connected, then the tangle is said to be a parity $(\infty)$ ((0), (1), resp.) tangle (e.g., see Figure~\ref{tangle1}).  
\end{definition}
\begin{notation}\label{notation2}
In this paper, we consider tangles up to a rotation of a multiple of $\pi /4$.  Thus, parity $(\infty)$ and parity (0) tangles are the same and one can drop the usual (NW, SW, NE, and SE) boundary designations.  
Then, a parity $(\infty)$, (0), or (1)  \emph{prime} tangle is denoted by $\widehat{T_n}$  or $\widehat{U_n}$, as shown in Figure~\ref{tangle1}, where $n$ is an index which represents the number of double points of the tangle.      
\begin{figure}
\includegraphics[width=8cm]{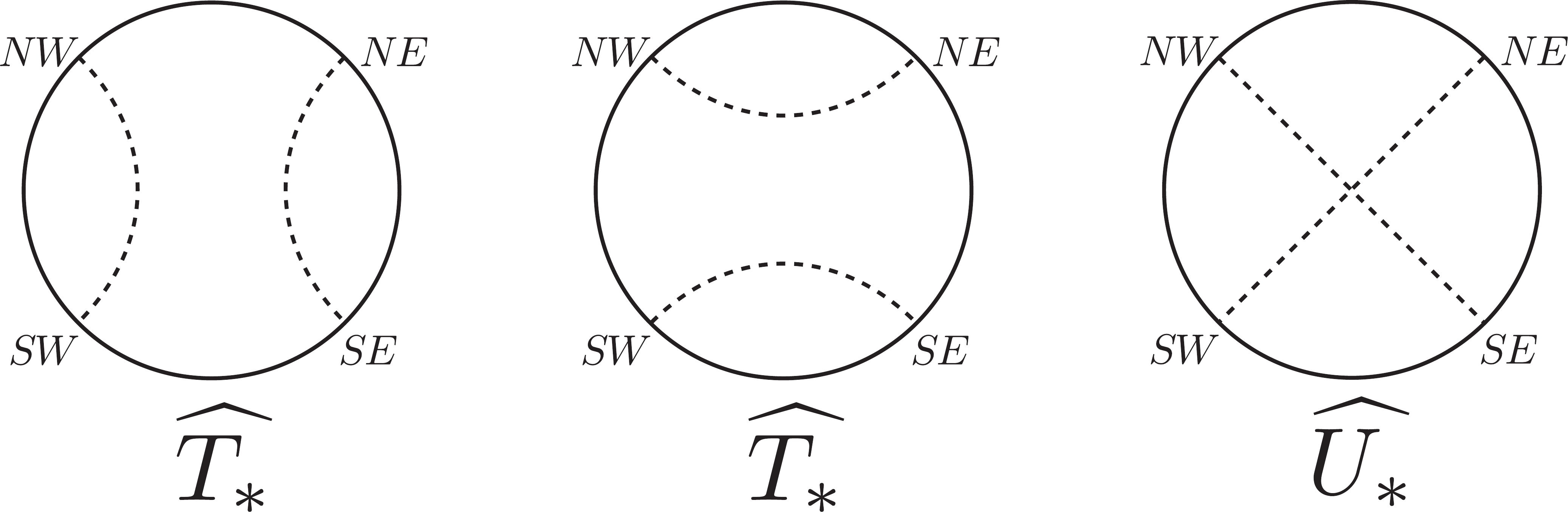}
\caption{Dotted curves indicate the connections of tangles.  From the left, each tangle is called a parity $(\infty)$ tangle $T_*$, a parity $(0)$ tangle $T_*$, and a parity $(1)$ tangle $U_*$.  }\label{tangle1}
\end{figure}
\end{notation}
\section{\textbf The main result and a conjecture}
\begin{notation}[knot projection $\widehat{n_i}$]\label{notation3}
Let $n$ be the number of double points of a knot projection and let $i$ be a positive integer.   
The symbol $\widehat{n_i}$ ($n \le 8$) denotes a knot projection defined as follows:
\begin{itemize}
\item For $n \le 6$, $\widehat{n_i}$ corresponds to the knot diagram $n_i$ in the knot table in \cite{R}.   
\item For $n = 7$ ($i \le 7$), $\widehat{n_i}$ corresponds to the knot diagram $n_i$ in the knot table in \cite{R}.
\item For $n = 7$ ($8 \le i \le 10$),   $\widehat{7_{i}}$ is the knot projection obtained from $\widehat{7_{i-3}}$ by a flype.
\item For $n = 8$ ($i \le 18$), $\widehat{n_i}$ corresponds to the knot diagram $n_i$ in the knot table in \cite{R} (note that each $8_i$ ($1 \le i \le 18$) represents an alternating knot diagram). 
\item For $n = 8$ ($19 \le i \le 27$), $\widehat{8_{i}}$ is the knot projection obtained from $\widehat{8_{j}}$ ($1 \le j \le 18$) by at most two flypes.  In the following table, $N_f$ denotes the minimal number of flypes necessary to deform from $\widehat{8_{j}}$ to $\widehat{8_{i}}$.  
\begin{center}

{\renewcommand\arraystretch{1.4}

\begin{tabular}{|c|c|c|c|c|c|c|c|} \hline
\text{knot projection} & $\widehat{8_{6}}$ & $\widehat{8_{8}}$ & $\widehat{8_{11}}$ & $\widehat{8_{12}}$ & $\widehat{8_{13}}$ & $\widehat{8_{14}}$ & $\widehat{8_{15}}$ \\ \hline
$N_f = 1$ & $\widehat{8_{19}}$ & $\widehat{8_{20}}$ & $\widehat{8_{21}}$ & $\widehat{8_{22}}$ & $\widehat{8_{24}}$ & $\widehat{8_{25}}$, $\widehat{8_{26}}$ & $\widehat{8_{27}}$ \\ \hline
$N_f = 2$ &  &  &  & $\widehat{8_{23}}$ &  &  &  \\ \hline
\end{tabular}
}
\end{center}
\item For every $n$, let $\widehat{n_i}'$ be the mirror image of $\widehat{n_i}$.  
\end{itemize}
\end{notation}
\begin{definition}\label{def2}
Let $\mathcal{P}_{\le n}$ be the set of prime knot projections up to orientations of the ambient $2$-sphere with at most $n$ double points. 
Let $\mathcal{P}'_{\le n}$ be the set of knot projections, each of which is the mirror image of each element of $\mathcal{P}_{\le n}$.       
\end{definition}
\begin{theorem}\label{mainthm}
Let $\mathcal{P}_{\le 8}$ and $\mathcal{P}'_{\le 8}$ be the set as in Definition~\ref{def2}.  
The set $\mathcal{P}_{\le 8} \cup \mathcal{P}'_{\le 8}$ is given in Table~\ref{table1} and Table~\ref{table4}.  
\begin{table}
\caption{The table of prime knot projections in $\mathcal{P}_{\le 8}$ up to eight double points ($\widehat{n_i}$ is the symbol as in  Notation~\ref{notation3}).}\label{table1}
\includegraphics[width=10cm]{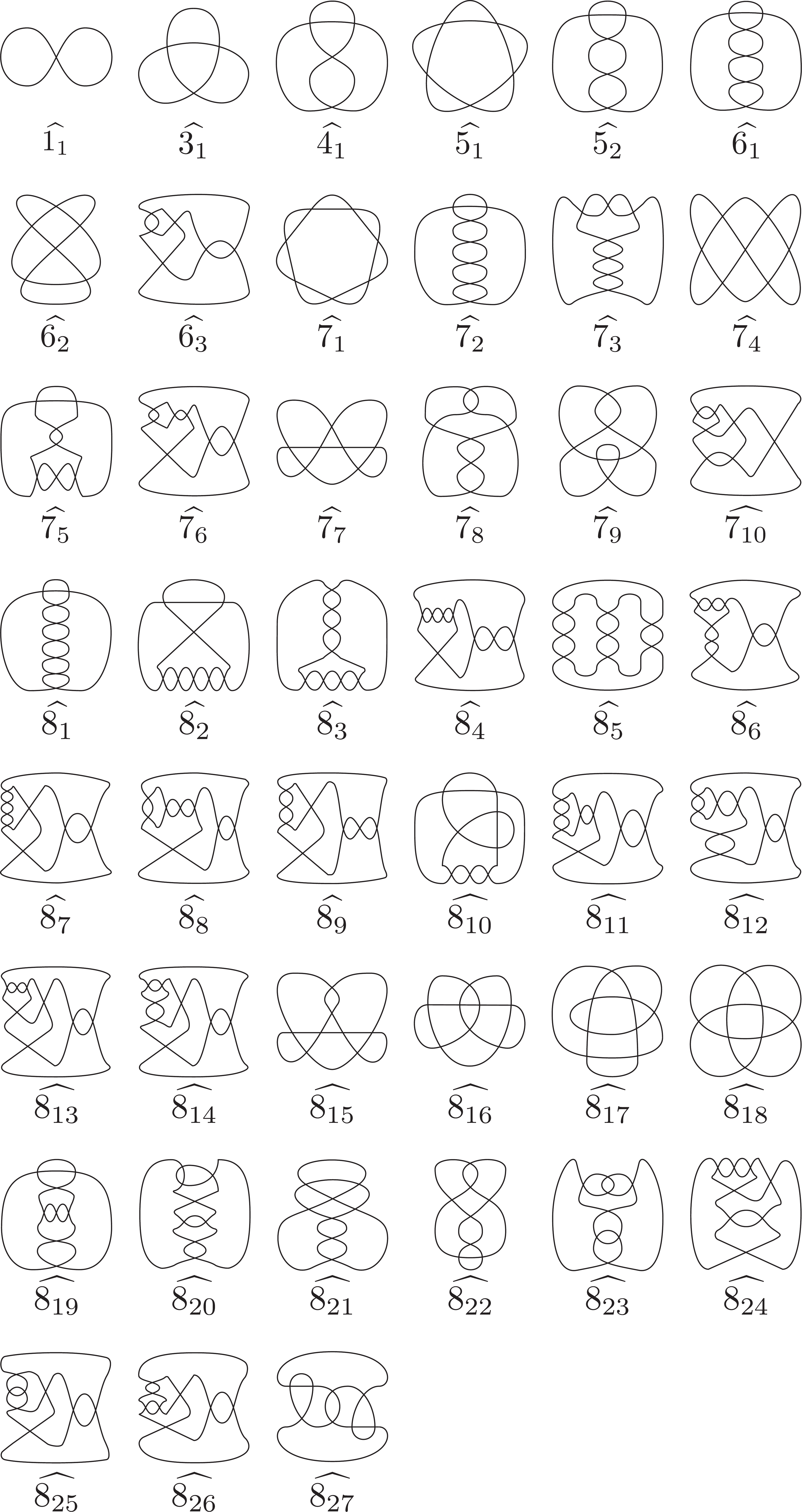}
\end{table}  
\end{theorem}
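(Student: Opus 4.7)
The plan is to reduce the tabulation problem to a finite combinatorial enumeration by combining three ingredients: (i) Tait's flyping theorem, which guarantees that any two reduced alternating diagrams of a given prime alternating knot differ by a finite sequence of flypes, (ii) the fact that every nontrivial reduced knot projection with $n$ double points is the underlying projection of a unique (up to mirror symmetry) reduced alternating diagram with $n$ crossings, and (iii) the arrow-diagram invariant of Proposition~\ref{prop1}, which detects whether a projection coincides with its mirror image. Step (ii) immediately partitions $\mathcal{P}_{\le 8}$ into classes indexed by prime alternating knots with at most eight crossings, which are enumerated in Rolfsen's table \cite{R}. Thus the problem becomes: for each entry $n_i$ with $n\le 8$, list all reduced projections of $n_i$ up to ambient isotopy on $S^2$.

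To carry out the enumeration within each class, I would use the Conway-style tangle decomposition in Notation~\ref{notation1}: write each projection in the form $N(A+1+B)$ and observe that a flype at the distinguished crossing replaces $(A,B)$ by either $(1+A,B')$ or $(A',B+1)$ up to relabeling, and that any flype in a prime knot projection arises this way. Since every prime knot projection with at most eight double points decomposes as the numerator closure of two tangles one of which has at most four double points, I would first tabulate all prime tangles $\widehat{T_n}, \widehat{U_n}$ with $n\le 4$ (the tangle table), and then, for each alternating knot from Rolfsen's list, iterate: start with the Rolfsen projection, identify every flypeable crossing by locating the decompositions $A+1+B$, apply the flype, and check whether the resulting projection is ambient-isotopic on $S^2$ to one already produced. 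The seven- and eight-crossing cases in Notation~\ref{notation3} predict exactly how many new projections appear and with how many flypes, which provides a concrete checklist: one verifies that each listed $\widehat{7_i}$, $\widehat{8_i}$ is genuinely new, and that no further flype produces a projection not already on the list. The finiteness of this procedure follows because a reduced alternating diagram has only finitely many tangles $A+1+B$ summing to its crossing number, and each orbit under flyping is finite.

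For the mirror images, I would use Proposition~\ref{prop1}: compute the arrow diagram of each $\widehat{n_i}$ and of its mirror $\widehat{n_i}'$, and record the pair as distinct in $\mathcal{P}_{\le 8}\cup \mathcal{P}'_{\le 8}$ precisely when the two arrow diagrams differ. This is a purely combinatorial check on oriented Gauss diagrams and produces Table~\ref{table4}. Primality is handled by ruling out any numerator closure of the form $N(T_1+T_2)$ with both $T_i$ nontrivial prime tangles of smaller crossing number not of the standard connected-sum form; for $n\le 8$ this is a small finite check against the tangle table.

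The main obstacle I expect is completeness of the flype orbit: one must argue that no projection in $\mathcal{P}_{\le 8}$ has been missed, i.e.\ that the combinations of at most two flypes enumerated in Notation~\ref{notation3} already exhaust the flype orbit of every eight-crossing alternating knot. This requires, for each of the eighteen alternating eight-crossing knots, a case analysis showing that any further flype either returns a projection already on the list or is trivial in the sense of Definition~\ref{def_flype}. Avoiding redundancy is a dual difficulty: two different flype sequences applied to the same Rolfsen diagram can yield ambient-isotopic projections, so each candidate must be certified distinct, and the arrow diagram invariant of Proposition~\ref{prop1} is the natural tool for this certification. Once both the completeness and the non-redundancy checks are carried out case-by-case against the tangle table, Tables~\ref{table1} and~\ref{table4} follow by assembling the resulting classes.
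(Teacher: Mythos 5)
Your proposal follows essentially the same route as the paper: reduce to Rolfsen's alternating knot table via the Menasco--Thistlethwaite flyping theorem, bound the relevant tangles by four double points so that only flypes across $\widehat{T_2}$ and $\widehat{U_3}$ need to be considered, iterate flypes to exhaust each orbit, and separate mirror images with the arrow-diagram invariant of Proposition~\ref{prop1}. The remaining work in both your plan and the paper is the same finite case-by-case verification, so the approaches coincide.
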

For a proof of Theorem~\ref{mainthm}, see Section~\ref{proofm}.  
\begin{table}
\caption{The table of knot projections in $\mathcal{P}'_{\le 8} \setminus \mathcal{P}_{\le 8} $ ($\widehat{n_i}$ is the symbol as in  Notation~\ref{notation3}).}\label{table4}
\includegraphics[width=12cm]{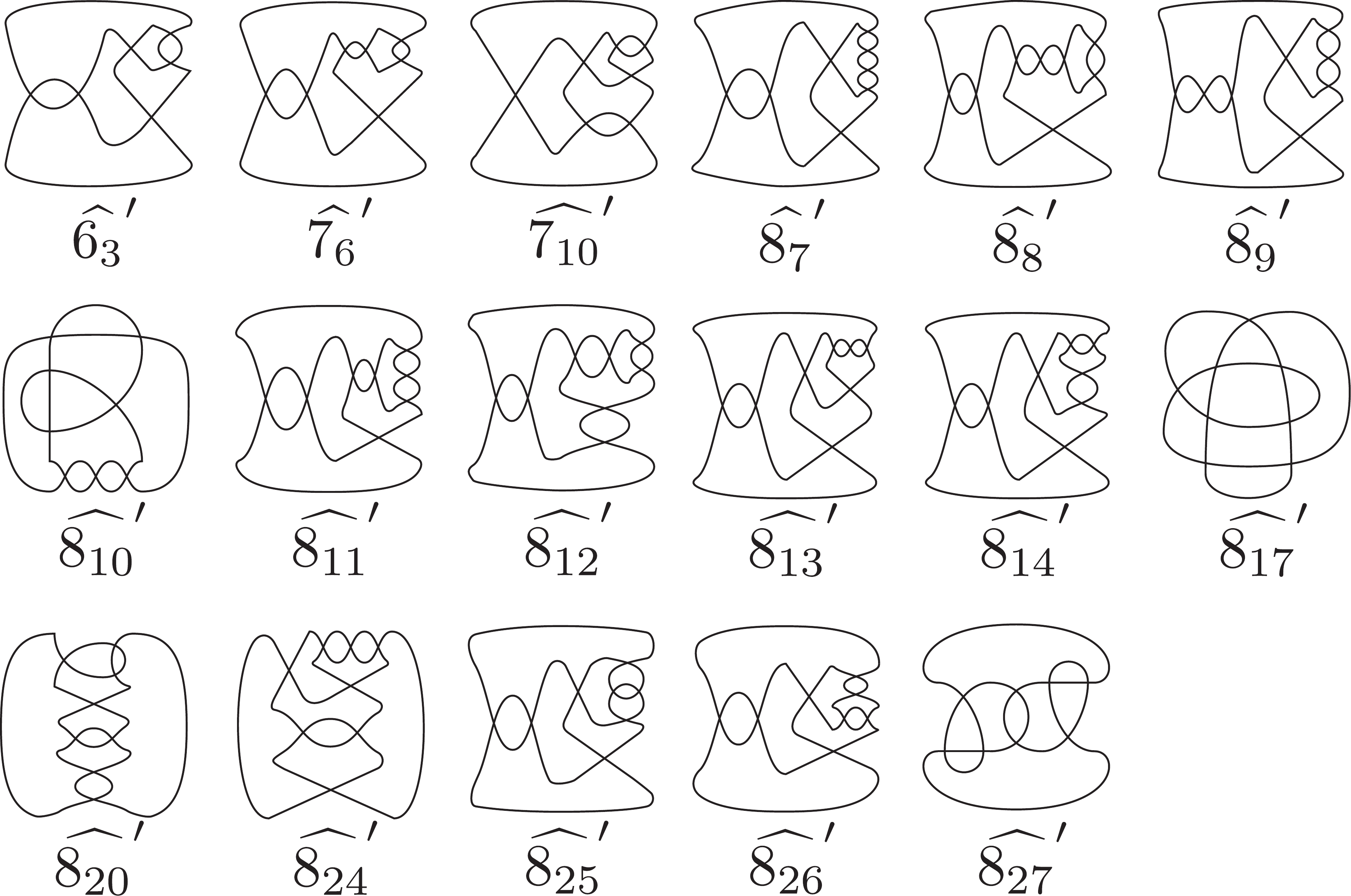}
\end{table}
By Theorem~\ref{mainthm}, prime knots are compared with prime knot projections up to eight double points as follows.   

{
\begin{center}
\begin{tabular}{|c|c|c|c|c|c|c|c|c|} \hline
$n$ & 1 & 2 & 3 & 4 & 5 & 6 & 7 & 8  \\ \hline
$|\mathcal{K}_n|$ & 0 & 0 & 1 & 1 & 2 & 3 & 7 & 21 \\ \hline
$|\mathcal{P}_n|$ & 1 & 0 & 1 & 1 & 2 & 3 & 10 & 27 \\ \hline
\end{tabular}
%\begin{tabular}{|c|c|c|c|c|c|c|c|c|} \hline
%$n$ & 1 & 2 & 3 & 4 & 5 & 6 & 7 & 8  \\ \hline
%$|\mathcal{K}_n \cup \mathcal{K}'_n|$ &  &  &  &  &  &  &  &  \\ \hline
%$|\mathcal{P}_n \cup \mathcal{P}'_n|$ &  &  &  &  &  &  &  &  \\ \hline
%\end{tabular}
\end{center}
}

\begin{conjecture}\label{mainconj}
Let $n$ be a positive integer.  
Let $P$ be a prime knot projection and $K$ a prime knot.  
Let $c(P)$ be the number of double points of $P$ and $c(K)$ the minimum number of crossings of $K$.  Let $\mathcal{K}_n$ $=$ $\{ K~|~c(K)=n \}$ and $\mathcal{P}_n$ $=$ $\{ P~|~c(P)=n \}$.  
%Let $\mathcal{K}'_{\le n}$ $($$\mathcal{P}'_{\le n}$, resp.$)$ be the set of knots $($knot projections, resp.$)$, each of which is the mirror image of each element of $\mathcal{K}_{\le n}$ $($$\mathcal{P}_{\le n}$, resp.$)$.
For a set $S$, $|S|$ denotes the cardinality of $S$.   
\begin{enumerate}
\item If $3 < n < m$, $|\mathcal{K}_n| < |\mathcal{K}_m|$ $($a famous conjecture \cite[Page~34, Unsolved Question~4]{Adams}$)$.   
\item If $3 < n < m$, $|\mathcal{P}_n| < |\mathcal{P}_m|$.  
\item $|\mathcal{K}_n| \le |\mathcal{P}_n|$. \label{conj3}  
\end{enumerate}
\end{conjecture}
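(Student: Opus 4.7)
The conjecture splits into three clauses of very different character, and my plan treats them in turn.

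Clause (1) is the long-standing conjecture of Adams. I do not attempt it here: producing a strict injection $\mathcal{K}_n \hookrightarrow \mathcal{K}_{n+1}$ requires an operation on knots preserving both primeness and minimum crossing number, and no such operation is currently known outside restricted families. This is the main obstacle, and I have no new attack to offer.

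For clause (2), my plan is to exhibit, for each $n>3$, a strict injection $\iota_n : \mathcal{P}_n \hookrightarrow \mathcal{P}_{n+1}$ by local twist insertion. Given $P \in \mathcal{P}_n$, canonically select a double point $v$ --- say, the first vertex visited in the lexicographically smallest oriented Gauss traversal of $P$, well-defined up to mirror by Proposition~\ref{prop1} --- and replace a small disc neighborhood of $v$ by the rational integer-$2$ tangle, i.e.\ two double points in series, so that $\iota_n(P)$ has exactly $n+1$ double points. Three verifications are required. First, $\iota_n(P)$ is prime, because a prime tangle inserted at a single crossing of a prime projection cannot create a splitting sphere: any such sphere can be pushed off the inserted local disc, inducing a splitting of $P$. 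Second, $\iota_n$ is injective, because the two inserted double points form a distinguished bigon of $\iota_n(P)$ whose neighborhood matches this local model, so $\iota_n$ is inverted by collapsing that bigon. Third, $\iota_n$ is not surjective, witnessed by the alternating $(2,n+1)$-torus projection, which has no such local bigon. The principal subtlety is making the canonical choice of $v$ genuinely well-defined in the presence of combinatorial symmetries of $P$, and the arrow-diagram calculus introduced earlier in the paper is well-suited to this.

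For clause (3), my plan is a shadow injection $s : \mathcal{K}_n \to \mathcal{P}_n$ sending a prime knot $K$ to the underlying projection of a canonical (say, lexicographically smallest Gauss-code) minimum-crossing diagram of $K$. Primeness of $s(K)$ is immediate, since any splitting sphere in the shadow would split the knot. For alternating $K$, injectivity of $s$ follows from the Menasco--Thistlethwaite flyping theorem, as the alternating coloring of a prime alternating projection determines a knot up to mirror. The hard case, and the main obstacle, is non-alternating $K$: the shadow of a non-alternating minimum diagram also admits an alternating coloring producing another prime knot, and one must engineer the canonical rule so that no non-alternating $K$ collides with the shadow of some alternating $K'$ with $c(K')=n$. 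My plan is to discriminate by an invariant computable from the shadow alone, such as the Kauffman bracket span of its alternating coloring, and to use the data compiled in Theorem~\ref{mainthm} as the base case of an induction on $n$; whether this strategy extends to all $n$ is, I expect, the genuine mathematical content of clause (3).
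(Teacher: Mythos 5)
This statement is labelled a \emph{conjecture} in the paper, and the paper offers no proof of any of its three clauses; the only commentary given is the remark (with Figure~\ref{hana}) that clause~(\ref{conj3}) is not obvious because the three distinct prime knots $8_{18}$, $8_{19}$, $8_{20}$ all admit minimal diagrams with the same underlying projection $\widehat{8_{18}}$. So there is no ``paper proof'' to match your proposal against, and your proposal, by its own admission, does not close any of the three clauses: clause~(1) is not attempted, and clauses~(2) and~(3) each end with the key difficulty explicitly deferred. As it stands this is a plan of attack on an open problem, not a proof.

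Beyond that, there are concrete gaps. For clause~(2), your non-surjectivity witness is wrong: the standard alternating $(2,n+1)$-torus projection is a closed chain of bigons, so every adjacent pair of its double points matches your ``two double points in series'' local model exactly; it cannot certify that $\iota_n$ misses anything. Injectivity is also unestablished: inverting $\iota_n$ by ``collapsing that bigon'' presupposes that the inserted bigon can be recognized inside $\iota_n(P)$ without already knowing $P$, which is precisely the canonicity problem you set aside; a projection with several bigons may collapse to several distinct elements of $\mathcal{P}_n$, and nothing in the argument rules out $\iota_n(P_1)=\iota_n(P_2)$ for $P_1\neq P_2$ via different bigons. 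For clause~(3), the paper's own Figure~\ref{hana} is exactly the obstruction to your shadow map $s$: the non-alternating prime knots $8_{19}$ and $8_{20}$ have minimal diagrams whose shadow coincides with that of the alternating knot $8_{18}$, so any ``canonical minimal diagram'' rule must be shown to separate such triples, and you give no mechanism that does so (the Kauffman bracket span of the alternating coloring of a shared shadow is the same for all knots sharing that shadow, so it cannot discriminate among them). The honest conclusion is that all three clauses remain open, consistent with the paper's presentation.
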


Conjecture~\ref{mainconj}~(\ref{conj3}) is not obvious, for example, for a knot projection $P$ in Figure~\ref{hana}, there are at least three distinct knots.      
\begin{figure}[h!]
\includegraphics[width=8cm]{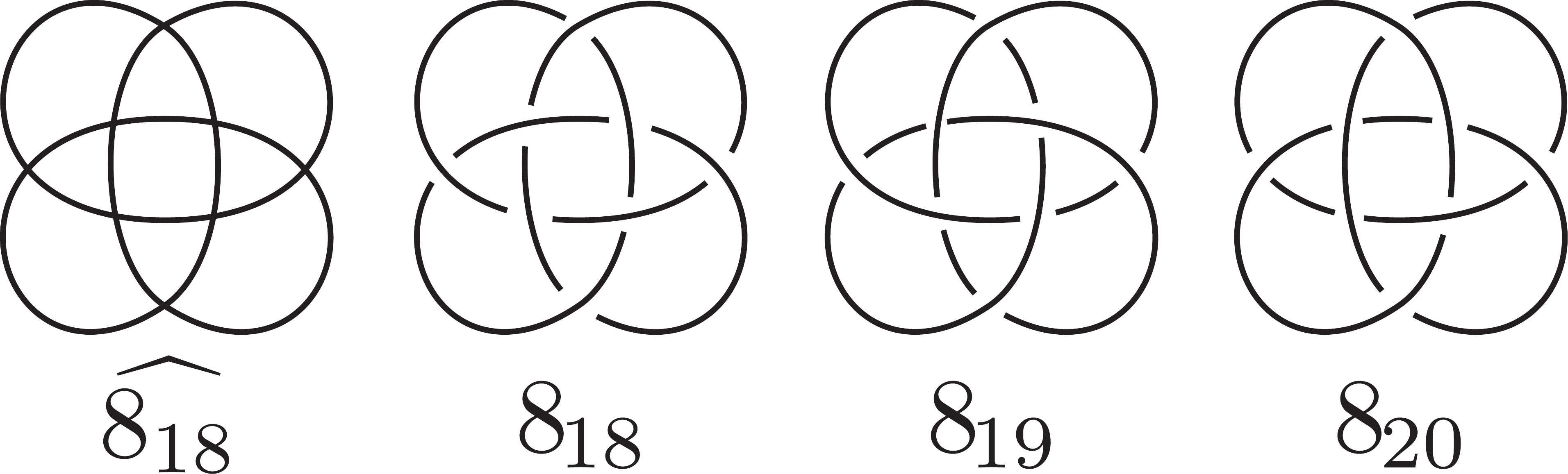}
\caption{Each of three distinct knots $8_{18}$, $8_{19}$, and $8_{20}$ in the knot table (up to isotopy) in \cite{R} has the knot projection $\widehat{8_{18}}$.}\label{hana}
\end{figure}  
\section{\textbf Proof of Theorem~\ref{mainthm}.  }\label{proofm}
Recall the following well-known facts: 
    
%Facts~\ref{fact1}--\ref{fact3}.  
%\begin{fact}\label{fact1}
(1)~For every knot projection $P$, there exists a  knot diagram $D$ such that $P$ is obtained from $D$ by forgetting over/under information.    
%\end{fact}
%\begin{fact}[Tait flyping conjecture, Theorem of Menasco and Thistlethwaite \cite{MT}] \label{fact2}

(2)~Two alternating knots $K_1$ and $K_2$ are isotopic if and only if any two corresponding minimal knot diagrams of $K_1$ and $K_2$ are related by a finite sequence of flypes (Tait flyping conjecture, Theorem of Menasco and Thistlethwaite \cite{MT}).     
%\end{fact}
%\begin{fact}\label{fact3}  
%\end{fact}
%\begin{proof}
%For every nontrivial knot projection $P$, there exists a double point of $P$.  Then, there exists a sufficient small disk $U_1$ containing exactly one double point, as shown in Figure~\ref{U}.  Then, $cl(S^2 \setminus U_1)$ is also a tangle.     
%\end{proof}

%Let $Alt_n$ be the set of knot projections, each of which is a projection of an alternating knot diagram with $n$ crossings in the knot table in \cite{R}.   Then, by the above facts (1) and (2), $\mathcal{P}_n$ is obtained from $Alt_n$ via flypes.  
\subsection{\textbf Step~1: Tabulation of tangles at most four double points}
Recall that Notation~\ref{notation1}.  By the definition of flypes (Definition~\ref{def_flype}), we have Lemma~\ref{lemma0}.   
\begin{lemma}\label{lemma0}
For every flype in a knot projection $P$, $P$ is decomposed into two tangles $A$, $B$, and the third tangle $(+1)$ that satisfy $P$ $=$ $N(A+1+B)$.  Then, there are two choices, we can flype by either rotating $A$ or by rotating $B$.  Then, either choice results into the same knot projection up to mirror symmetry.     
\end{lemma}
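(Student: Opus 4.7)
The plan has three steps, following directly from the structure of the statement. First, I would obtain the decomposition $P = N(A+1+B)$ as a translation of Definition~\ref{def_flype}: by that definition, a flype site in $P$ is precisely a tangle together with a single adjacent crossing, as in Figure~\ref{flype1}. Calling that tangle $A$, the crossing $(+1)$, and the complementary tangle on $S^2$ the tangle $B$, one reads off $P = N(A+1+B)$ drawn as in Figure~\ref{N(AC)}. This step is essentially definition-chasing.

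Second, I would make the two flype choices precise. With $P = N(A+1+B)$, the flype can be performed either by rotating the composite subtangle $A+1$ (``rotating $A$''), which transforms $A+1$ into $1+A^{\rho}$ and so produces $\phi_A(P) = N(1+A^{\rho}+B)$; or by rotating the composite $1+B$ (``rotating $B$''), which transforms $1+B$ into $B^{\rho}+1$ and so produces $\phi_B(P) = N(A+B^{\rho}+1)$. Here $X^{\rho}$ denotes the reflected tangle obtained from the $\pi$-rotation in Figure~\ref{flype1}. This step is also bookkeeping based on the flype definition, together with the $N(\cdot)$-conventions recalled in Notation~\ref{notation1}.

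The third step, showing that $\phi_A(P)$ and $\phi_B(P)$ agree up to mirror symmetry, is the substantive content. I would embed $P$ on the round $2$-sphere so that the crossing $(+1)$ lies on a great circle $C$ separating $A$ and $B$ into complementary disks $D_A$ and $D_B$ bounded by $C$. The flype across $A$ is then realized by a reflection $r_A$ of $D_A$ across an in-plane axis through the crossing, and the flype across $B$ by the analogous reflection $r_B$ of $D_B$. When the axes of $r_A$ and $r_B$ are chosen compatibly, meeting $C$ at the crossing along the same diameter, the combined map $r_A \cup r_B : S^2 \to S^2$ is a global reflection of $S^2$ across a great circle through $(+1)$, hence orientation-reversing. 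Since $\phi_A$ and $\phi_B$ differ by composition with this reflection, we conclude $\phi_A(P)$ equals the image of $\phi_B(P)$ under an orientation-reversing self-homeomorphism of $S^2$, which by Definition~\ref{def_reduced} is precisely the mirror image.

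The main obstacle is this compatibility of the reflection axes in the third step: one must verify that the two hemispherical $\pi$-flips can be glued to a globally defined, orientation-reversing homeomorphism of $S^2$ and that the resulting projection differs from $\phi_B(P)$ only by the expected orientation reversal. The subtlety is that each in-plane $\pi$-rotation is orientation-\emph{reversing} on its hemisphere, so confirming that their union has the correct global orientation behavior requires a careful choice of axes aligned through the single crossing, after which the remaining picture verification is routine.
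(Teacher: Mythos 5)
First, note that the paper offers no argument for this lemma at all: it is stated with the single sentence ``By the definition of flypes (Definition~\ref{def_flype}), we have Lemma~\ref{lemma0}.'' Your Steps~1 and~2 are consistent with that reading and with Notation~\ref{notation1}. The problem is Step~3, and it is a genuine gap rather than a presentational one: a flype is \emph{not} realized by a reflection of the hemisphere containing the tangle. If $D_A$ is the disk containing $A$ and you reflect $D_A$ across the in-plane (horizontal) axis while fixing $D_B$, the crossing stays where it is and you obtain $N(A^{\rho}+1+B)$, not the flyped projection $N(1+A^{\rho}+B)$; the defining feature of a flype is precisely that the crossing is transported from one side of $A$ to the other, which changes the combinatorics \emph{outside} $D_A$ as well. (Worse, with the double point placed exactly on the separating circle $C$ as you propose, reflecting one side and fixing the other destroys transversality at that point, so $r_A(P\cap D_A)\cup(P\cap D_B)$ is not even a knot projection.) Consequently the identity $r(\phi_B(P))=\phi_A(P)$ for $r=r_A\cup r_B$, which is the heart of your argument, does not hold as stated: $\phi_A$ and $\phi_B$ are not ``reflect one hemisphere, fix the other.''

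The conclusion is nevertheless true, and your instinct that a single global reflection is the key is correct; what is missing is a second, spherical ingredient. Writing $\phi_A(P)=N(1+A^{\rho}+B)$ and $\phi_B(P)=N(A+B^{\rho}+1)$ as in your Step~2, apply the global reflection $\rho$ of $S^2$ across the great circle containing the horizontal axis of Figure~\ref{N(AC)}: this sends $\phi_B(P)$ to $N(A^{\rho}+B+1)$ (each box is flipped, the single crossing and the closure pattern are preserved, and the left--right order is unchanged). One must then check that $N(A^{\rho}+B+1)$ and $N(1+A^{\rho}+B)$ are ambient isotopic \emph{on the sphere}: the terminal crossing sits in the complementary disk of the two tangle disks together with the two closure arcs, and there it forms the unique (up to isotopy rel boundary) configuration of two once-crossing arcs joining the prescribed endpoints, so it may be slid around the back of the sphere from the right of $B$ to the left of $A^{\rho}$ by an orientation-preserving isotopy (in effect, $N(T+1)=N(1+T)$ on $S^2$). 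Only after this extra step do the two flype choices agree up to mirror symmetry. Your proposal omits this spherical isotopy entirely and replaces it with the hemisphere-gluing claim, which is where the argument breaks.
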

In the rest of this paper, we suppose that for every knot projection $P$, the number of double points of $P$ is at most eight.   The statement of Lemma~\ref{lemma2} is given using Notation~\ref{notation2}.  
\begin{lemma}\label{lemma2}
For a prime knot projection $P$, suppose that $P'$ is obtained from $P$ by a flype of a crossing across a tangle $A$ or $B$ of $P$.  If the flype is a nontrivial flype, then, either $A+1$ or $1+B$ is a parity $(\infty)$ tangle $\widehat{T_*}$ with at most four double points.   
\end{lemma}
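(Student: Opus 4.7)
The plan is to combine a simple double-point count, the geometric structure of a flype, and a primeness transfer from $P$ to the sub-tangle being flipped.

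First I would establish the double-point bound. By Notation~\ref{notation1}, $P = N(A + 1 + B)$, so the number of double points of $P$ equals $|A| + |B| + 1$, where $|X|$ denotes the number of double points in a tangle $X$. Since we are in the regime $|P| \le 8$, we have $|A| + |B| \le 7$, and therefore $\min(|A|, |B|) \le 3$. Without loss of generality suppose $|A| \le 3$, so $|A+1| \le 4$; the case $|B| \le 3$ is symmetric and gives $|1+B| \le 4$.

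Next I would identify the parity of $A+1$. By the pictorial definition of a flype (Figure~\ref{flype1}), the flype of a crossing across $A$ is realized as a $180^\circ$ rotation of the sub-tangle $A+1$ inside its ambient disk, and the result is pasted back into the rest of $P$ through the same four boundary points. For this pasting to be compatible with the ambient projection, the two strands of $A+1$ must match its boundary points in the NW--SW and NE--SE pattern, i.e., parity $(\infty)$. By Notation~\ref{notation2}, parity $(\infty)$ is identified with parity $(0)$, so $A+1$ is a tangle of type $\widehat{T_*}$.

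Then I would upgrade ``tangle'' to ``prime tangle.'' Since $P$ is a prime knot projection, no decomposing circle on the ambient $S^2$ splits $P$ into two nontrivial pieces. If $A+1$ were not prime, the definition of prime tangle (Definition~\ref{connected}) would produce a disk $D'$ containing the ambient disk of $A+1$ whose boundary meets the arcs of $A+1$ in a non-simple configuration; extending $\partial D'$ outward through the remaining tangle $1+B$ would give a decomposing circle for $P$, contradicting primeness. Together with the bound $|A+1| \le 4$, we conclude $A+1 = \widehat{T_n}$ for some $n \le 4$.

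The main obstacle is the parity step: extracting ``parity $(\infty)$'' rigorously from Figure~\ref{flype1} requires checking that alternative parities for $A+1$ would force the rotated sub-tangle to re-enter $P$ with swapped or twisted boundary attachments, which would alter the rest of $P$ rather than leave it unchanged. This is essentially a finite case analysis driven by the figure, but it is the only step whose content is more than bookkeeping; the count, the WLOG, and the primeness transfer are routine once the parity is pinned down.
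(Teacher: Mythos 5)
Your double-point count and the primeness transfer are fine (the paper disposes of each in a single sentence: $|A|+|B|\le 7$ forces one of the two sides to have at most four double points, and $A$, $B$ are prime because $P$ is), but the parity step --- which you yourself flag as the only substantive one --- does not work as you have set it up. A flype rotates the sub-tangle by $\pi$, permuting its four endpoints by $NW \leftrightarrow SE$, $NE \leftrightarrow SW$; this permutation preserves each of the three parity classes $(\infty)$, $(0)$, $(1)$, so the rotated tangle re-glues consistently into the rest of $P$ whatever its parity is. Flypes are routinely performed across tangles of every parity, so ``compatibility of the pasting'' imposes no constraint on the parity of $A+1$, and there is no finite case analysis of ``swapped or twisted boundary attachments'' to be extracted from Figure~\ref{flype1}. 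A telling symptom is that your argument never uses the hypothesis that the flype is \emph{nontrivial}, yet that hypothesis is precisely what is needed to exclude the parity-$(1)$ case.

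The paper's argument is different and does consume nontriviality: since the small side, say $A$, is a prime tangle with at most three double points, if it has parity $(1)$ it must be one of the two tangles $\widehat{U_1}$ and $\widehat{U_3}$ of Figure~\ref{U}, and one checks directly that a flype of a crossing across $\widehat{U_1}$, or across the copy of $\widehat{T_2}$ sitting inside $\widehat{U_3}$, returns the same knot projection up to ambient isotopy, i.e.\ is a trivial flype. Discarding these cases by the nontriviality hypothesis leaves only the parity-$(\infty)$ tangles $\widehat{T_*}$ with at most four double points. To repair your proof you would need to replace the pasting-compatibility claim with this explicit enumeration of small parity-$(1)$ prime tangles together with the verification that the corresponding flypes are trivial (or some other argument that genuinely uses the nontriviality hypothesis).
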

\begin{proof}
Let $P$ be a knot projection with a decomposition such that $P$ $=$ $N(A+1+B)$.  Note that $A$ and $B$ are prime tangles since $P$ is a prime knot projection.  
%Here, note that it is clear that a parity (1) tangle $\widehat{U_*}$ with at most three double points is $\widehat{U_1}$ or $\widehat{U_3}$ (Figure~\ref{U}).  
The only parity~(1) prime tangles $U$ with at most three double points are shown in Figure~\ref{U}.  
For $\widehat{U_1}$, it is clear that the flype of a crossing $\widehat{U_1}$ across a tangle is a trivial flype.  
For $\widehat{U_3}$, the flype of a crossing across a tangle $\widehat{T_2}$ ($\subset$ $\widehat{U_3}$) is a trivial flype.  
\begin{figure}[htbp]
\includegraphics[width=3cm]{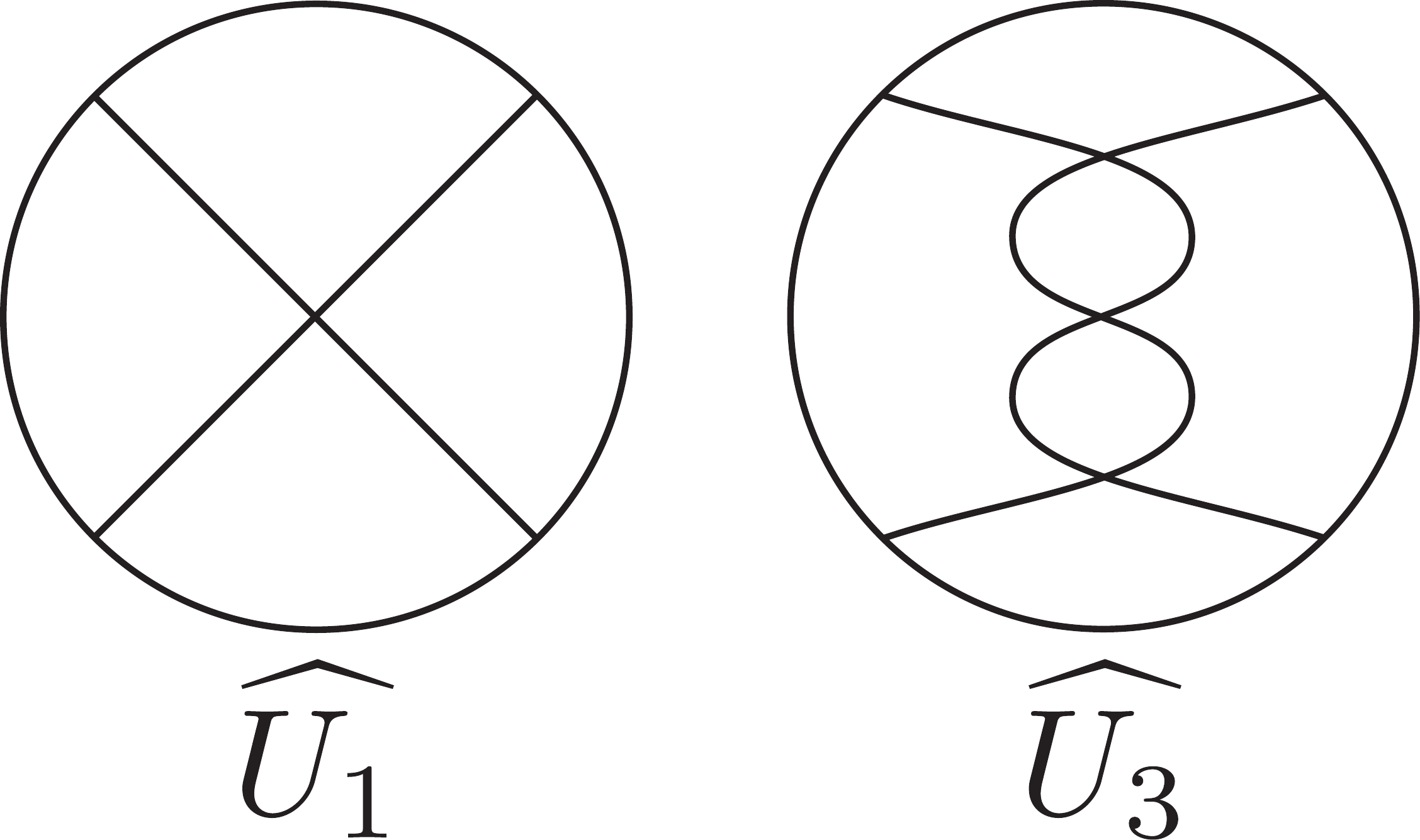}
\caption{$\widehat{U_1}$ and $\widehat{U_3}$.}\label{U}
\end{figure}  
Thus, it is sufficient to consider the case that both $A+1$ and $1+B$ are  parity ($\infty$) tangles, each of which is $\widehat{T}_{*}$ with at most four double points.
\end{proof}

It is easy to prove Lemma~\ref{lemma1} and we leave the details to the reader.   
\begin{lemma}\label{lemma1}
Every possibility of a prime parity $(\infty)$ tangle $\widehat{T_*}$ with at most four double points is one of the list of Figure~\ref{lemma3_fig}.        
\begin{figure}[htbp]
\includegraphics[width=11cm]{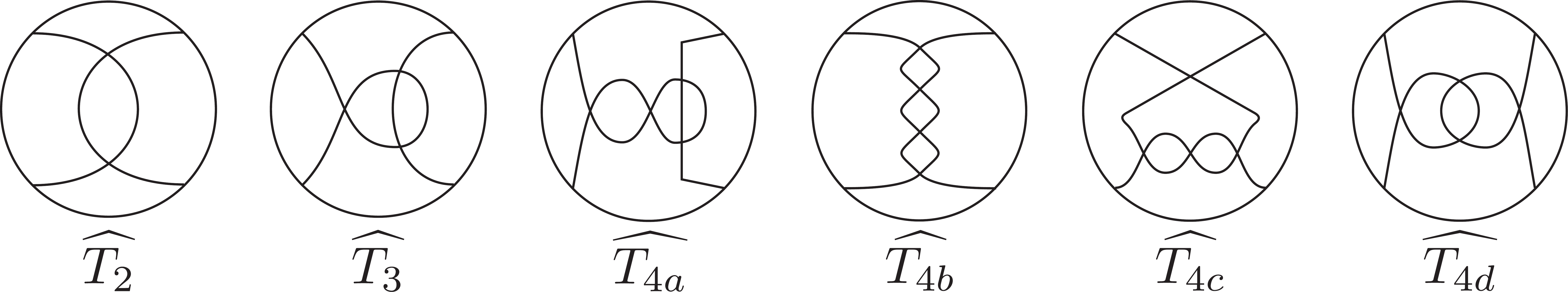}
\caption{$\widehat{T_2}$--$\widehat{T_{4d}}$.}\label{lemma3_fig}
\end{figure}
\end{lemma}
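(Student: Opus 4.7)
The plan is to proceed by direct enumeration, organizing the cases by the number $n$ of double points for $n = 2, 3, 4$. A parity $(\infty)$ tangle is, by definition, a pair of disjoint arcs properly embedded in a disk, one connecting $NW$ to $SW$ and the other connecting $NE$ to $SE$, together with a generic immersion producing $n$ transverse double points; so each such tangle is encoded by a $4$-valent planar graph in the disk with four univalent vertices on the boundary (together with an appropriate pairing), considered up to ambient isotopy of the disk and up to rotation by a multiple of $\pi/4$ (Notation~\ref{notation2}).

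First I would dispose of the easy cases. For $n=2$, the two double points must each be formed by the two different strands meeting (a single strand meeting itself at two double points inside the disk would either create a reducible nugatory loop or violate primality via a separating arc $\alpha$ as in Definition~\ref{connected}). Up to rotation and the identification of parity $(\infty)$ with parity $(0)$, this yields exactly one diagram, namely $\widehat{T_2}$. For $n=3$, I would argue that any self-intersection of a single strand in the interior of the disk produces either a monogon (which is reducible) or a bigon attached along a single arc (which either gives a flype-trivial configuration or forces a disk $D'\supset D$ meeting the tangle in a non-simple arc, violating primeness); a direct case check then leaves only $\widehat{T_3}$.

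For $n=4$ I would set up a systematic enumeration by the combinatorial type of the underlying $4$-valent planar graph, organized by how many of the four double points lie on each of the two strands, and by the cyclic order in which the arcs meet along the boundary of the disk. Planarity of the immersion, together with the requirement that both arcs have the prescribed endpoints $\{NW,SW\}$ and $\{NE,SE\}$, prunes most candidates; primeness (no separating simple arc that can be enlarged into a non-simple one) eliminates the remaining non-examples such as those obtained as tangle sums of smaller tangles. The surviving diagrams are the four listed tangles $\widehat{T_{4a}}$, $\widehat{T_{4b}}$, $\widehat{T_{4c}}$, $\widehat{T_{4d}}$.

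The main obstacle is bookkeeping in the $n=4$ case: one must be careful not to double-count configurations that coincide after the $\pi/4$-rotation identifying parity $(\infty)$ with parity $(0)$, and simultaneously ensure that no prime tangle is inadvertently omitted. I would verify completeness by a cross-check: closing each candidate tangle with a single crossing as in $N(A+1+B)$ with $B$ empty must yield a prime knot projection among those already classified at $\leq 5$ double points, and every prime projection with $\leq 5$ double points should arise in this way from some tangle on the list.
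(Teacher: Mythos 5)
The paper offers no proof of this lemma at all---it says only ``It is easy to prove Lemma~\ref{lemma1} and we leave the details to the reader''---so there is no argument of the authors to compare yours against; your direct enumeration by the number of double points, organized by how the double points are distributed between self-intersections of each strand and mutual intersections of the two strands, is the natural way to fill that gap. The $n=2$ case and the general framework are fine.

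There are, however, two genuine problems. First, your blanket principle that a self-intersection of a single strand forces reducibility or non-primeness is false, and it would wrongly kill tangles that the lemma must retain. If the loop created by a self-crossing of one strand is threaded by the other strand, then \emph{no} subdisk meets the tangle in a single non-simple curve (any disk containing the whole loop contains the region it bounds, hence also the arcs of the other strand passing through it), so the primeness condition of Definition~\ref{connected} is never violated. In fact such tangles are unavoidable: for a parity $(\infty)$ tangle the four endpoints of the two strands do \emph{not} alternate on the boundary circle, so the two strands meet in an \emph{even} number of points, and hence any parity $(\infty)$ tangle with an odd number of double points must contain a self-crossing of one strand. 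The tangle $\widehat{T_3}=1+\widehat{T_2}$ is exactly such a threaded kink (one self-crossing plus two mutual crossings), and $\widehat{T_{4c}}=1+\widehat{U_3}$ likewise contains self-crossings; your $n=3$ argument, read literally, excludes these and then reinstates $\widehat{T_3}$ by fiat. Second, the $n=4$ case---which is the entire content of the lemma---is not actually carried out: you describe how the enumeration would be organized and then assert that the survivors are $\widehat{T_{4a}}$--$\widehat{T_{4d}}$. The proposed cross-check does not repair this, because it is not logically sound as a completeness test: a prime parity $(\infty)$ tangle $T$ omitted from the list could still have $N(T+1)$ coincide with the closure of a listed tangle, so agreement of closures with the known table of projections with at most five double points cannot detect the omission, and conversely not every such projection need decompose as $N(T+1)$ with $T$ of parity $(\infty)$. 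To make the proof complete you must actually run the finite case analysis for $n=4$ over the admissible distributions $(\text{self},\text{self},\text{mutual})$ with the mutual count even, including the threaded-loop configurations, and identify coincidences under isotopy and the $\pi/4$-rotation.
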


Lemma~\ref{lemma2} and Lemma~\ref{lemma1} imply Lemma~\ref{lemma5}.  
\begin{lemma}\label{lemma5}
Every nontrivial flyping is generated by flypes, each of which is a flype of a crossing across the tangle $\widehat{T_{2}}$ or $\widehat{U_{3}}$, as shown in  Figures~\ref{U} and \ref{lemma3_fig}.  
\end{lemma}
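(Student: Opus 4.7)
The plan is a finite case analysis driven by Lemmas~\ref{lemma2} and \ref{lemma1}. By Lemma~\ref{lemma2}, any single nontrivial flype in a flyping sequence decomposes the underlying projection as $P = N(A+1+B)$ with $A+1$ (or, symmetrically, $1+B$) equal to one of the six tangles $\widehat{T_2}, \widehat{T_3}, \widehat{T_{4a}}, \widehat{T_{4b}}, \widehat{T_{4c}}, \widehat{T_{4d}}$ from Lemma~\ref{lemma1}. If the flype is performed by rotating $A$, then $A$ is obtained from one of these six tangles by removing one crossing, so $A$ belongs to a short list whose members are, up to mirror symmetry, among $\widehat{U_1}$, $\widehat{T_2}$, $\widehat{T_3}$, and $\widehat{U_3}$.

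First I would dispose of the two easy cases. When $A = \widehat{U_1}$, the same reasoning used in the proof of Lemma~\ref{lemma2} for parity~$(1)$ subtangles shows that the flype is trivial and can simply be deleted from the flyping sequence. When $A = \widehat{T_2}$ or $A = \widehat{U_3}$, the flype is already of the form claimed in the conclusion and needs no further modification.

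The remaining case is $A = \widehat{T_3}$. Here I would use the decomposition $\widehat{T_3} = \widehat{T_2}+1$ to exhibit an internal $\widehat{T_2}$ inside the rotated region, and then verify diagrammatically that rotating all of $\widehat{T_3}$ has the same effect on $P$, up to ambient isotopy, as the composition of a flype across the inner $\widehat{T_2}$ followed by a flype across a remaining $\widehat{T_2}$ or $\widehat{U_3}$ in the resulting projection. Because $\widehat{T_3}$ is a small rational tangle with a transparent continued-fraction structure, this is a short, fully explicit check. Iterating this reduction across all flypes in the original sequence then yields the conclusion.

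The principal obstacle will be checking that after performing the first small flype on the inner $\widehat{T_2}$, the intermediate projection still admits a decomposition to which Lemma~\ref{lemma2} applies — in particular, that it remains a prime knot projection and that the second flype can again be arranged to rotate a $\widehat{T_2}$ or $\widehat{U_3}$. The symmetric case in which it is $1+B$ rather than $A+1$ that equals some $\widehat{T_*}$ causes no additional difficulty, because by Lemma~\ref{lemma0} the two rotations on either side of the crossing produce the same projection up to mirror symmetry. Once these verifications are assembled, the lemma follows.
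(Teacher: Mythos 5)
Your proposal is correct and follows essentially the same route as the paper: both use Lemma~\ref{lemma2} and the list in Lemma~\ref{lemma1} to reduce the rotated tangle $A$ to the candidates $\widehat{T_2}$, $\widehat{T_3}$, $\widehat{U_3}$ (the paper silently folds your $\widehat{U_1}$ case into the proof of Lemma~\ref{lemma2}), and both dispose of $\widehat{T_3}$ by observing that a flype across $\widehat{T_3}=\widehat{T_2}+1$ is generated by flypes across $\widehat{T_2}$ or is trivial. Your treatment is, if anything, slightly more explicit than the paper's about the residual diagrammatic check.
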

\begin{proof}
For a knot projection $P$, if there is a flype possible, then $P$ has a decomposition $P$ $=$ $N(A+1+B)$.  
This fact together with Lemma~\ref{lemma1} implies that it is sufficient to consider a tangle $A+1$ with at most four double points ($A$ has at most three double points) of type $\widehat{T_{*}}$ listed in Figure~\ref{lemma3_fig}.  This $A$ is $\widehat{T_{2}}$, $\widehat{T_{3}}$, or $\widehat{U_{3}}$.  

Here, note that we can exclude $\widehat{T_{3}}$.   This is because, in Figure~\ref{lemma3_fig}, a flype of a crossing across the tangle $\widehat{T_{3}}$ is generated by that of $\widehat{T_{2}}$, or is a trivial flype.  
\end{proof}

\subsection{\textbf Step~2: Tabulation of knot projections by flypes}
Recall the following notations and facts.  
Let $c(P)$ be the number of double points of $P$.
%and $c(K)$ the minimal number of crossings of $K$.  Let $\mathcal{K}_n$ $=$ $\{$ $K~|~c(K)=n$ $\}$ and 
Let $\mathcal{P}_n$ $=$ $\{$ $P :$ prime  $|~c(P)=n$ $\}$.
Let $Alt_n$ be the set of knot projections, each of which is a projection of an alternating knot diagram, up to mirror symmetry, with $n$ crossings in the knot  table in \cite{R}.  
By using facts~(1) and (2) in the beginning of Section~\ref{proofm},  
$\mathcal{P}_n$ is obtained from $Alt_n$ via flypes. 
%For a knot diagram $D$, $P(D)$ is a knot projection obtained from $D$ by forgetting over/under information.  
%Let $Alt_n$ be the set of knot projections, each of which is a knot projection $P(D)$ of a alternating knot diagram $D$ with $n$ crossings. 
%A table of $Alt_n$ is called a \emph{prime alternating knot table} up to $n$ crossings.    
\subsection{\textbf Step~2a: Up to six double points}
A table of $\{ \widehat{1_1} \}$ $\cup$ $Alt_3$ $\cup$ $Alt_4$ $\cup$ $Alt_5$ $\cup$ $Alt_6$ is known as Figure~\ref{6}.  
\begin{figure}[h!]
\includegraphics[width=8cm]{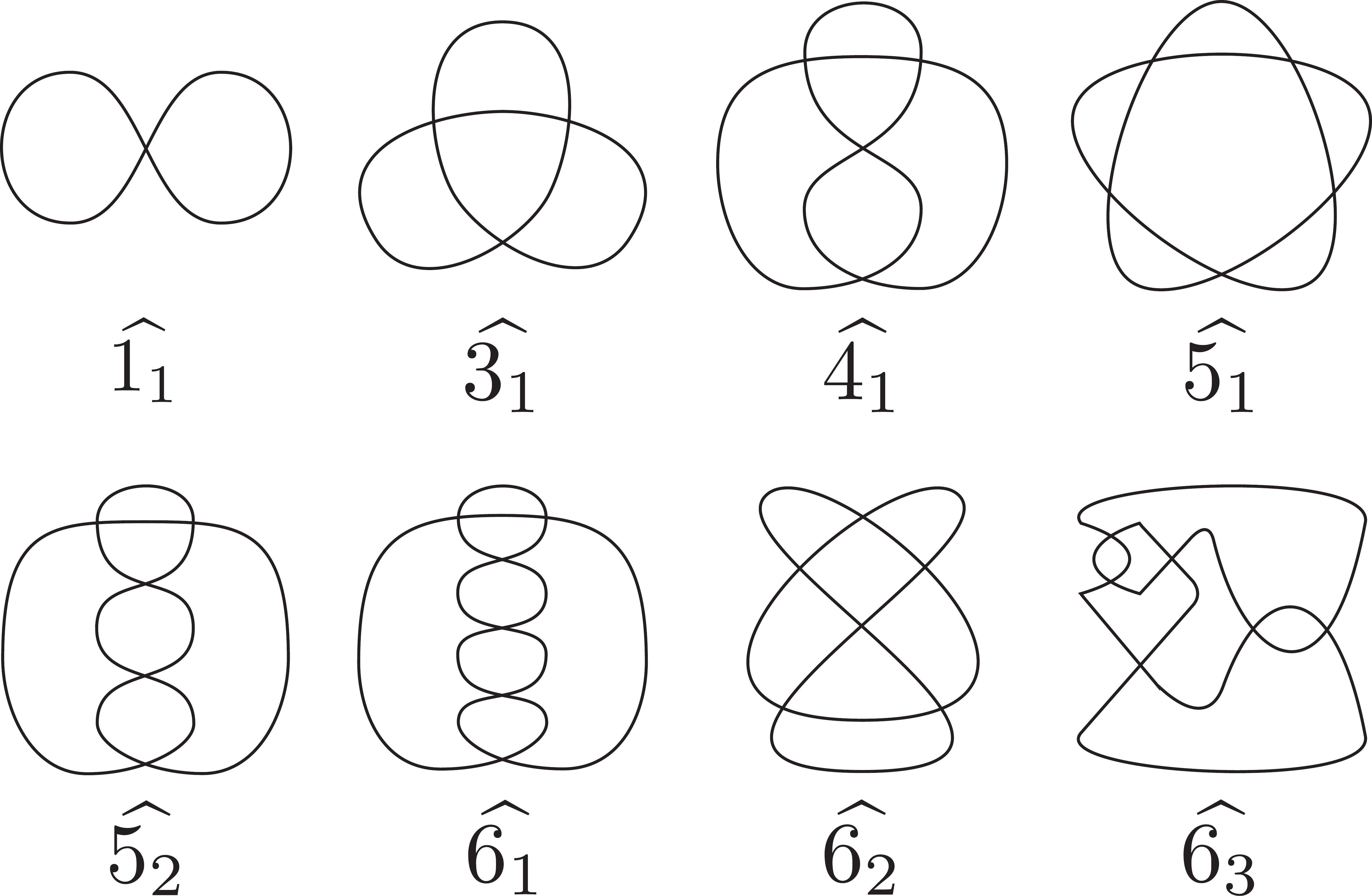}
\caption{$\{ \widehat{1_1} \}$ $\cup$ $Alt_3$ $\cup$ $Alt_4$ $\cup$ $Alt_5$ $\cup$ $Alt_6$.}\label{6}
\end{figure}
By Lemma~\ref{lemma0} and Lemma~\ref{lemma5}, for a knot projection, if there exists a nontrivial flyping which is caused, there exists $\widehat{T_{2}}$ $($$\subset$ $\widehat{T_3}$ $=1+\widehat{T_{2}}$, $\widehat{T_{2}}+1)$ for the knot projection.  
Thus, for each $P \in Alt_6$, if a knot projection $P'$ is obtained from $P$ by applying a flype of a crossing across the tangle $\widehat{T_{2}}$ and $P' \neq P$, $P' \in \mathcal{P}_6$.  However, there is no such tangle up to mirror symmetry ($\widehat{6_3}'$ is obtained from $\widehat{6_3}$ by a flype of a crossing across the tangle $\widehat{T_{2}}$).  Thus, $\mathcal{P}_6$ $=$ $Alt_6$.    
\subsection{\textbf Step~2b: Up to seven double points}
A table $Alt_7$ is known as Figure~\ref{7}.  
\begin{figure}[h!]
\includegraphics[width=8cm]{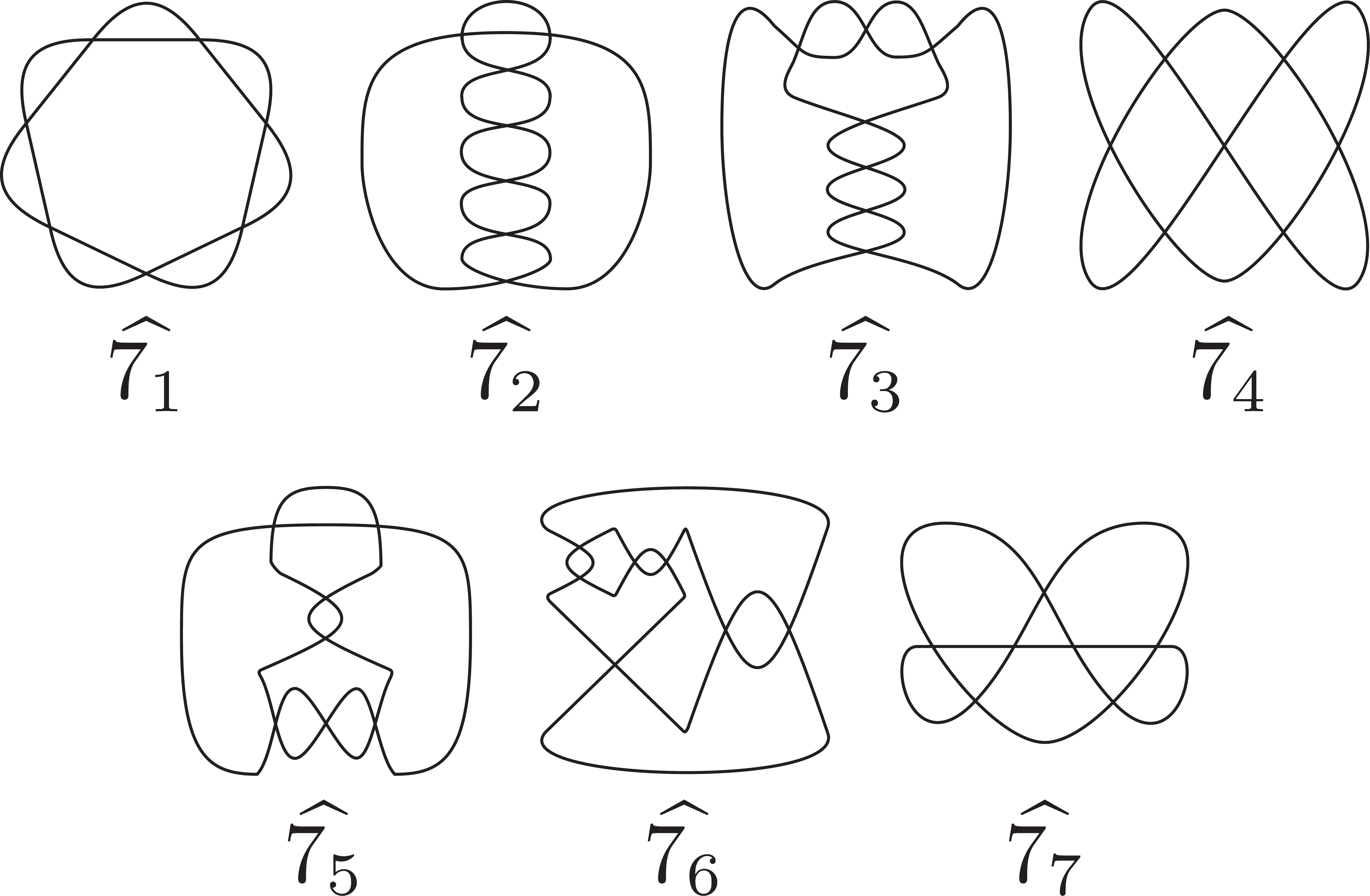}
\caption{$Alt_7$.}\label{7}
\end{figure}
By Lemma~\ref{lemma0} and Lemma~\ref{lemma5}, for a knot projection, if there exists a nontrivial flyping which is caused, there exists $\widehat{T_{2}}$ $($$\subset$ $\widehat{T_3}$ $=1+\widehat{T_{2}}$, $\widehat{T_{2}}+1)$ for the knot projection.   
Thus, for each $P \in Alt_7$, if a knot projection $P'$ is obtained from $P$ by applying a flype of a crossing across the tangle $\widehat{T_{2}}$ and $P' \neq P$,  $P' \in \mathcal{P}_7$.    
 
For example, we explain the first line of Figure~\ref{7flype} with respect to $\widehat{7_{8}}$ and $\widehat{7_5}$.  In Figure~\ref{7flype1}, (1) denotes an existence of a flype of a crossing across the tangle $\widehat{T_2}$ ($\subset  \widehat{T_3}$,  which equals $1+\widehat{T_2}$).  %Then, we have $\widehat{7_8}$ up to ambient isotopy, as shown in the right-hand side of Figure~\ref{7flype1}.  
\begin{figure}[h!]
\centering
\includegraphics[width=10cm]{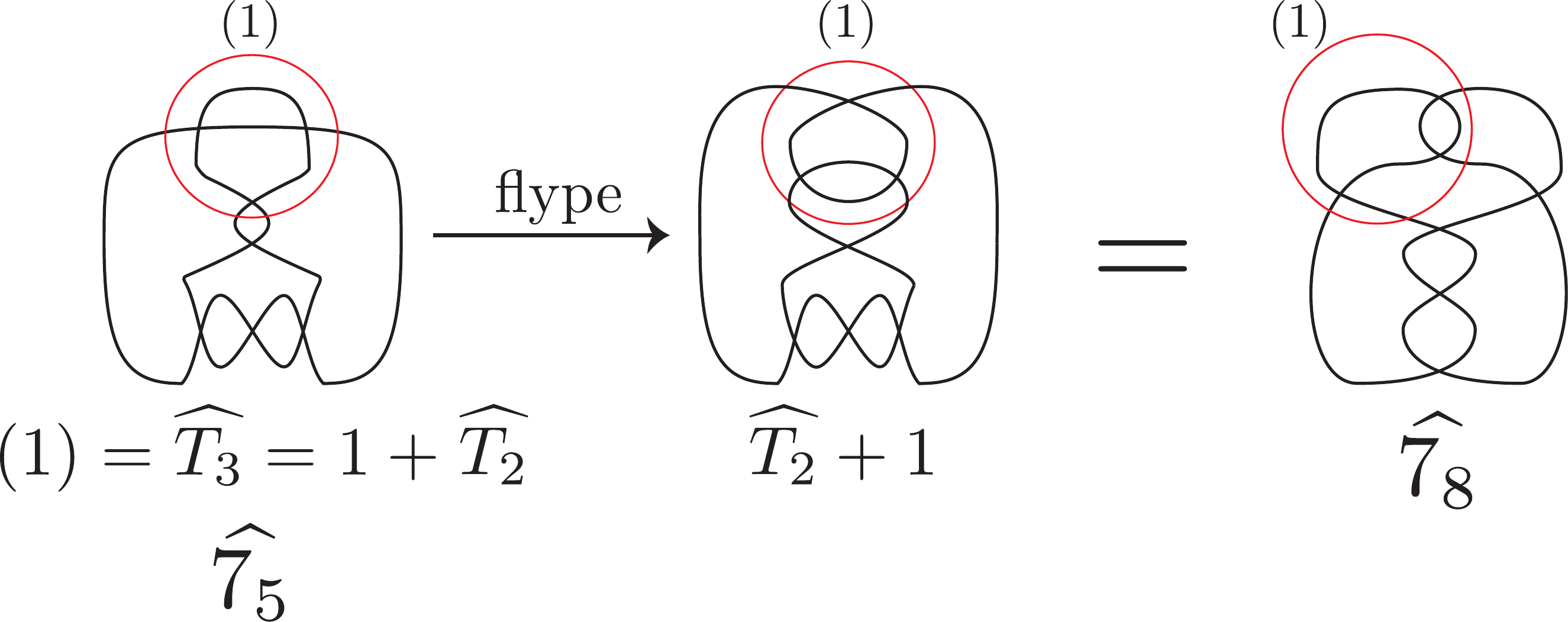}
\caption{$\widehat{7_{8}}$ is obtained from $\widehat{7_5}$. }\label{7flype1}
\end{figure}
%Second, in the second line of Figure~\ref{7flype1}, a flype of a crossing across the tangle $\widehat{T_2}$ ($\subset$ (1): $ \widehat{T_3}$ $=$ $1+\widehat{T_2}$).  Then, we have $\widehat{7_5}$ up to ambient isotopy, as shown in the left-hand side of Figure~\ref{7flype1}.  
%Third, in the third line of Figure~\ref{7flype1}, a flype of a crossing across the tangle $\widehat{T_2}$ ($\subset$ (1)': $ \widehat{T_3}$ $=$ $1+\widehat{T_2}$).  
Then, $\widehat{7_8}$ is obtained from $\widehat{7_5}$ up to ambient isotopy, as shown in Figure~\ref{7flype1}, which implies that $\widehat{7_8} \in {\mathcal{P}_7}$.     

Similarly, we list all the possibilities, i.e., for $\widehat{7_5}$, $\widehat{7_6}$, and $\widehat{7_7}$, there exist three ambient disks, each of which corresponds to a nontrivial flyping, as shown in Figure~\ref{7flype}, which implies that $\widehat{7_8}$, $\widehat{7_9}$, and $\widehat{7_{10}} \in {\mathcal{P}_7}$.  
  
%Let ${Alt_7}'$ $=$ $\{ \widehat{7_i}~|~1 \le i \le 10 \}$.  
\begin{figure}[h!]
\centering
\includegraphics[width=10cm]{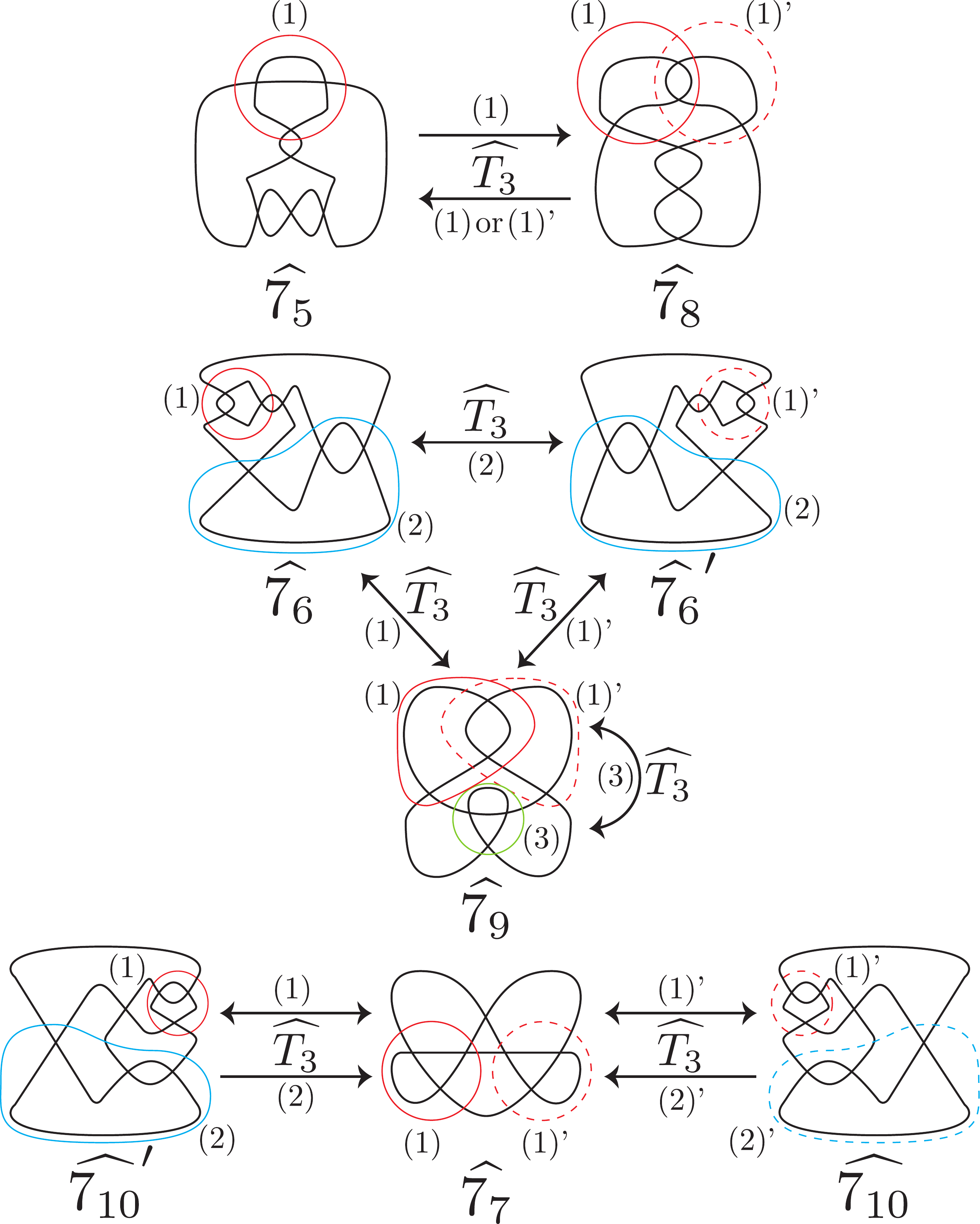}
\caption{$\widehat{7_{8}}$ is obtained from $\widehat{7_5}$ (top). $\widehat{7_9}$ is obtained from $\widehat{7_6}$ (center). $\widehat{7_{10}}$ is obtained from $\widehat{7_7}$ (bottom). }\label{7flype}
\end{figure}

Secondly, we seek a new knot projection $P''$  obtained from $P'$ ($=$ $\widehat{7_8}$, $\widehat{7_9}$, or $\widehat{7_{10}}$) by applying a flype of a crossing across the tangle $\widehat{T_{2}}$.    
However, there is no such $\widehat{T_{2}}$ (it is elementary to check every disk of type $\widehat{T_{2}}$ for $\widehat{7_8}$, $\widehat{7_9}$, or $\widehat{7_{10}}$).    
Thus, $\mathcal{P}_7$ $=$ $Alt_7$ $\cup$ $\{\widehat{7_8}$, $\widehat{7_9}, \widehat{7_{10}} \}$ that consists of knot projections up to mirror symmetry.    
\begin{remark}
For a number ($n$), we often use the symbol ($n$)', which is identified with ($n$) up to reflection on $S^2$ if necessary.  
\end{remark}
\subsection{\textbf Step~2c: Up to eight double points}\label{8_flype_sec}
Step~2c is the same process as Step~2b.  
A table $Alt_8$ is known as Figure~\ref{8}.  
\begin{figure}[h!]
\includegraphics[width=11cm]{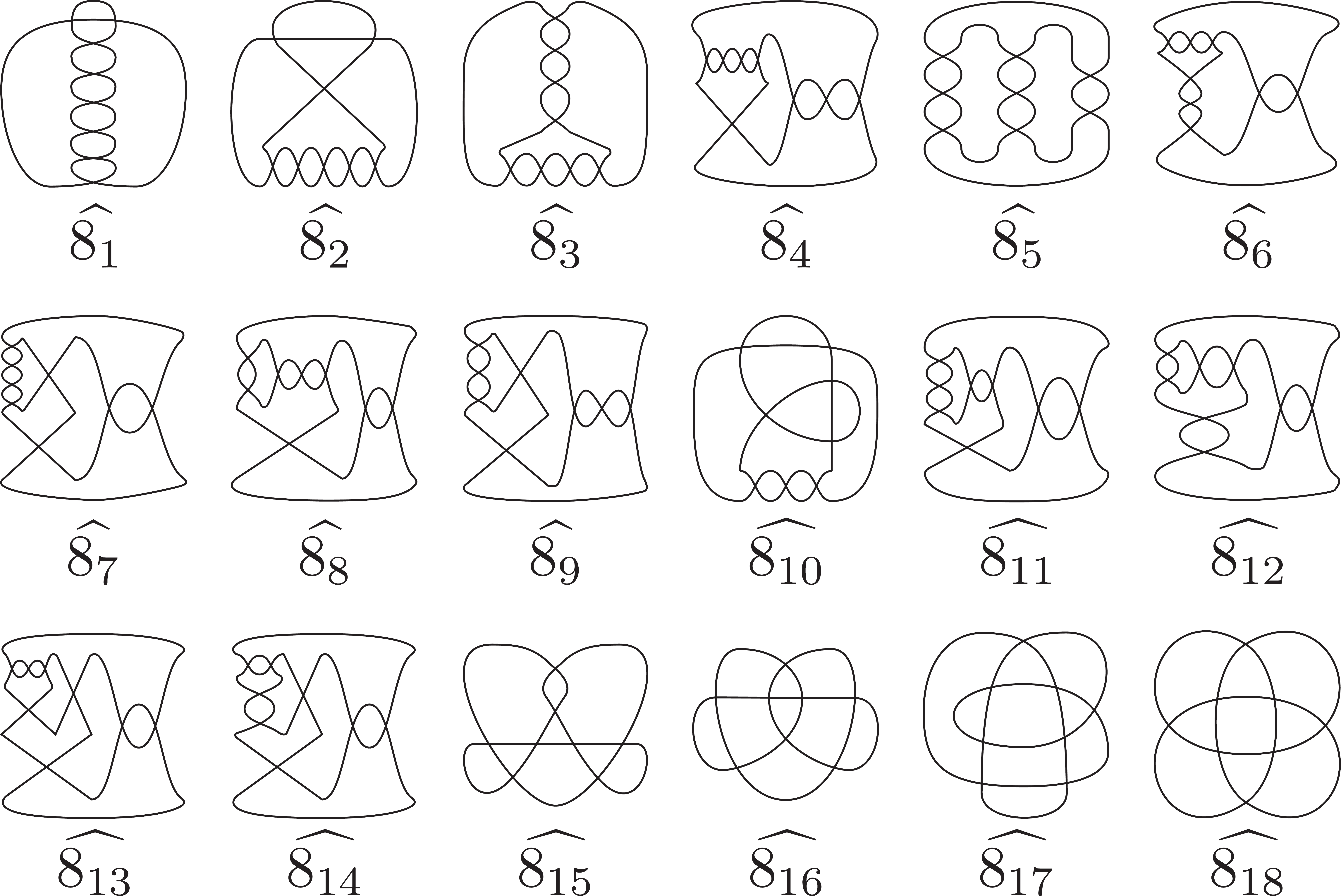}
\caption{$Alt_8$.}\label{8}
\end{figure}
By Lemma~\ref{lemma0} and Lemma~\ref{lemma5}, for a knot projection, if there exists a nontrivial flyping which is caused, there exists
$\widehat{T_{2}}$ such that $\widehat{T_3}$ $=1+\widehat{T_{2}}$ ($\widehat{T_{4c}}$ such that $\widehat{T_{4c}}$ $=1+\widehat{U_3}$, resp.), for the knot projection.  
Thus, for each $P \in Alt_8$, if a knot projection $P'$ is obtained from $P$ by applying a flype of a crossing across the tangle $\widehat{T_{2}}$ and $P' \neq P$, $P' \in \mathcal{P}_8$.  
There exist tangles, each of which corresponds to nontrivial flyping, as shown in Figures~\ref{8flype}--\ref{8flypec} (to  see these figures, see Figure~\ref{7flype1}, for example).  
%Note that $\widehat{8_4}'$ ($\widehat{8_7}'$, resp.) is obtained from $\widehat{8_4}$ ($\widehat{8_7}$, resp.) by a flype $1+F$ that is $\widehat{T_3}$ and $\widehat{8_{9}}'$ ($\widehat{8_{10}}'$, resp.) is obtained from $\widehat{8_9}$ ($\widehat{8_{10}}$, resp.) by a flype $1+F$ that is $\widehat{T_{4c}}$ (these four flypes are omitted in Figures~\ref{8flype}--\ref{8flypec}).    
%\begin{figure}[h!]
%\caption{Each flype for $\widehat{8_4}$, $\widehat{8_{9}}$, $\widehat{8_{10}}$}\label{8124}
%\end{figure}
Thus, $\widehat{8_{19}}$--$\widehat{8_{22}}$ and $\widehat{8_{24}}$--$\widehat{8_{27}} \in \mathcal{P}_8$.   

Secondly, we seek a new knot projection $P''$  obtained from the above $P'$ ($=$ $\widehat{8_{i}}$ ($19 \le i \le 27, i \neq 23$)) by applying a flype of a crossing across the tangle $\widehat{T_{2}}$ or $\widehat{U_{3}}$, we should add $P'' \in \mathcal{P}_8$.  
\begin{figure}[h!]
\includegraphics[width=12cm]{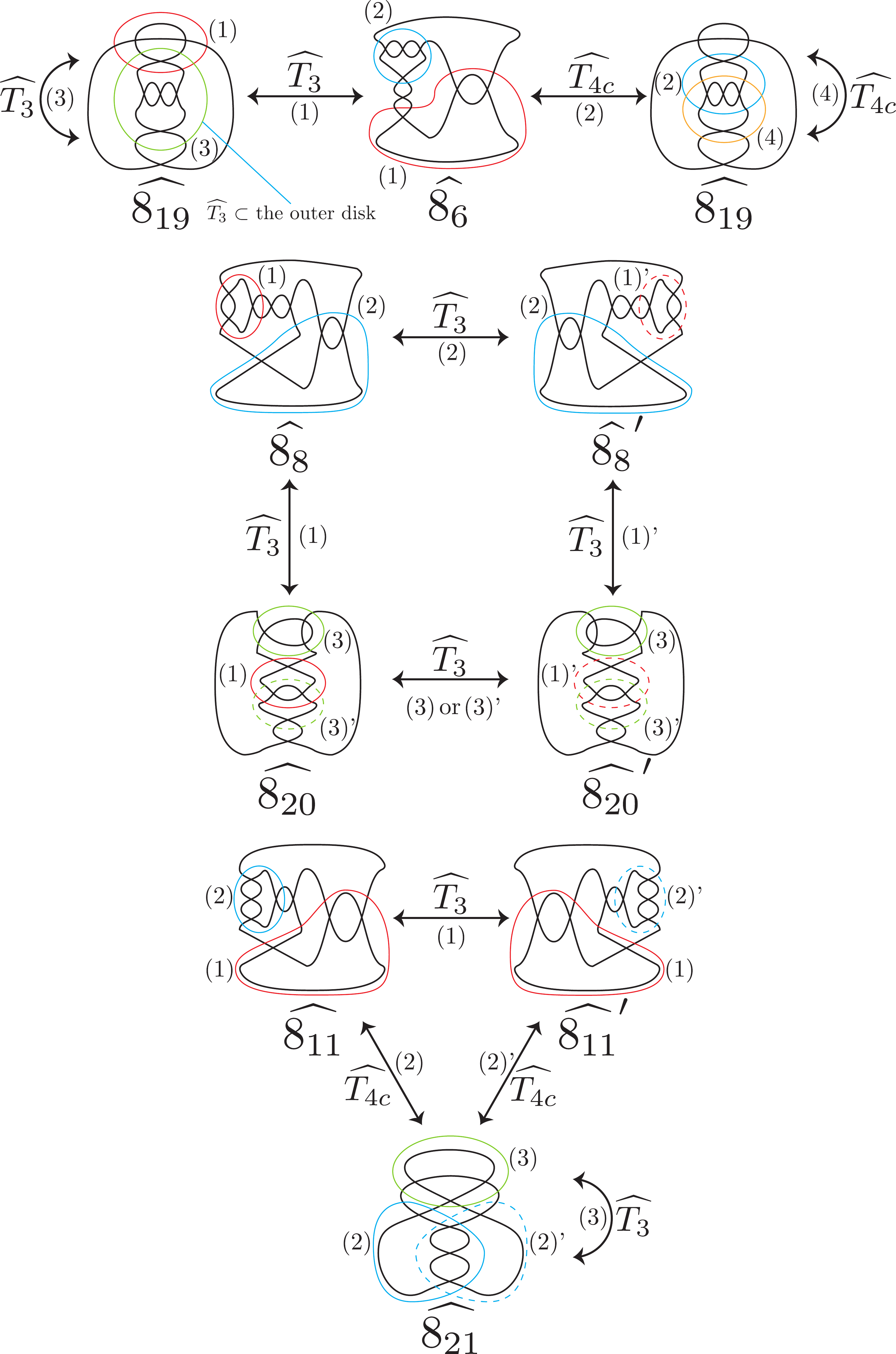}
\caption{The segment indicates a single flype of a crossing across the tangle $\widehat{T_{2}}$ or $\widehat{U_{3}}$.  Each dotted circle denotes one of the other choices of ambient disks.}\label{8flype}
\end{figure}
\begin{figure}[h!]
\includegraphics[width=12cm]{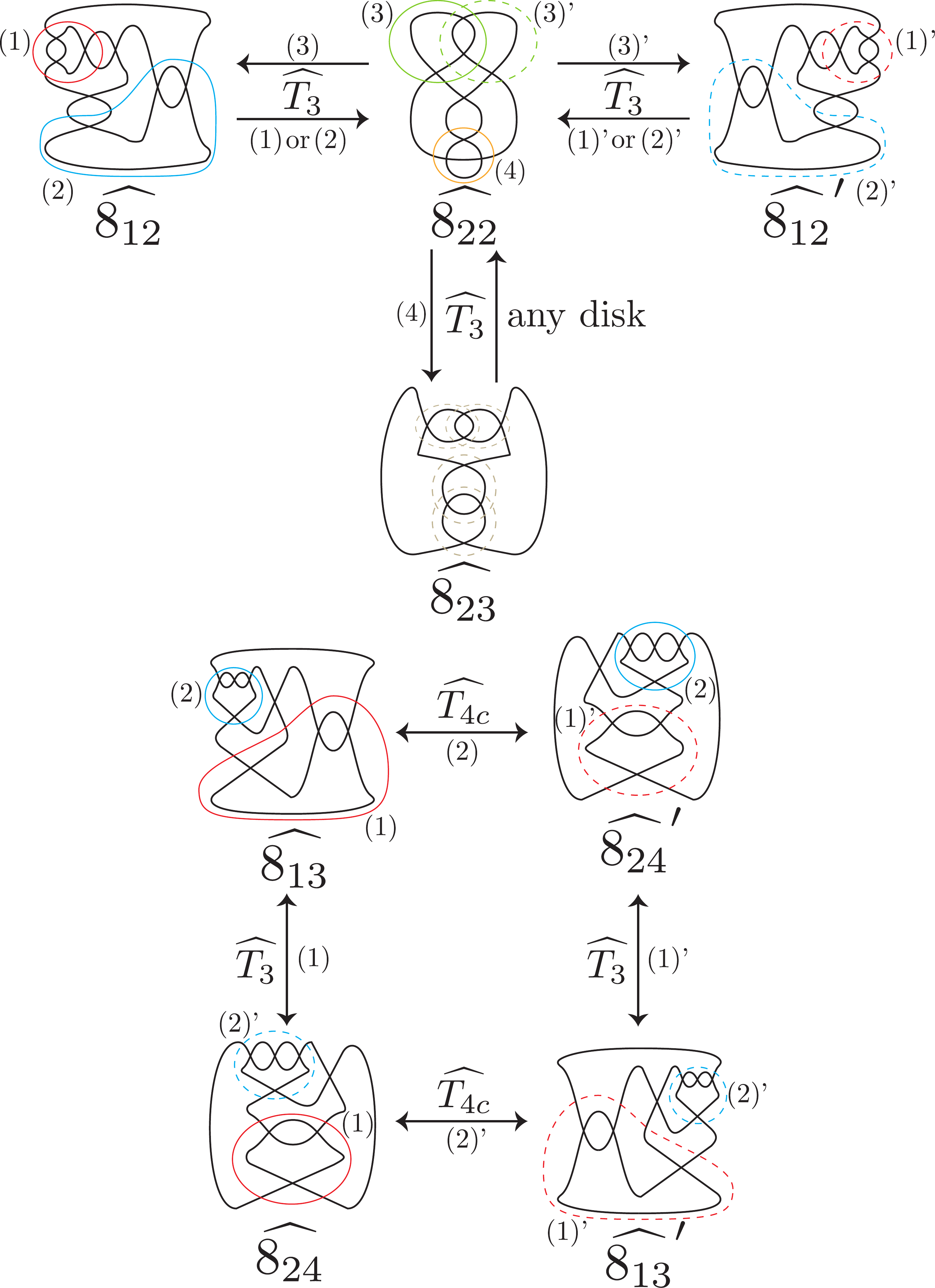}
\caption{The segment indicates a single flype of a crossing across the tangle $\widehat{T_{2}}$ or $\widehat{U_{3}}$.  Each dotted circle denotes one of the other choices of ambient disks.}\label{8flypeb}
\end{figure}
\begin{figure}[h!]
\includegraphics[width=12cm]{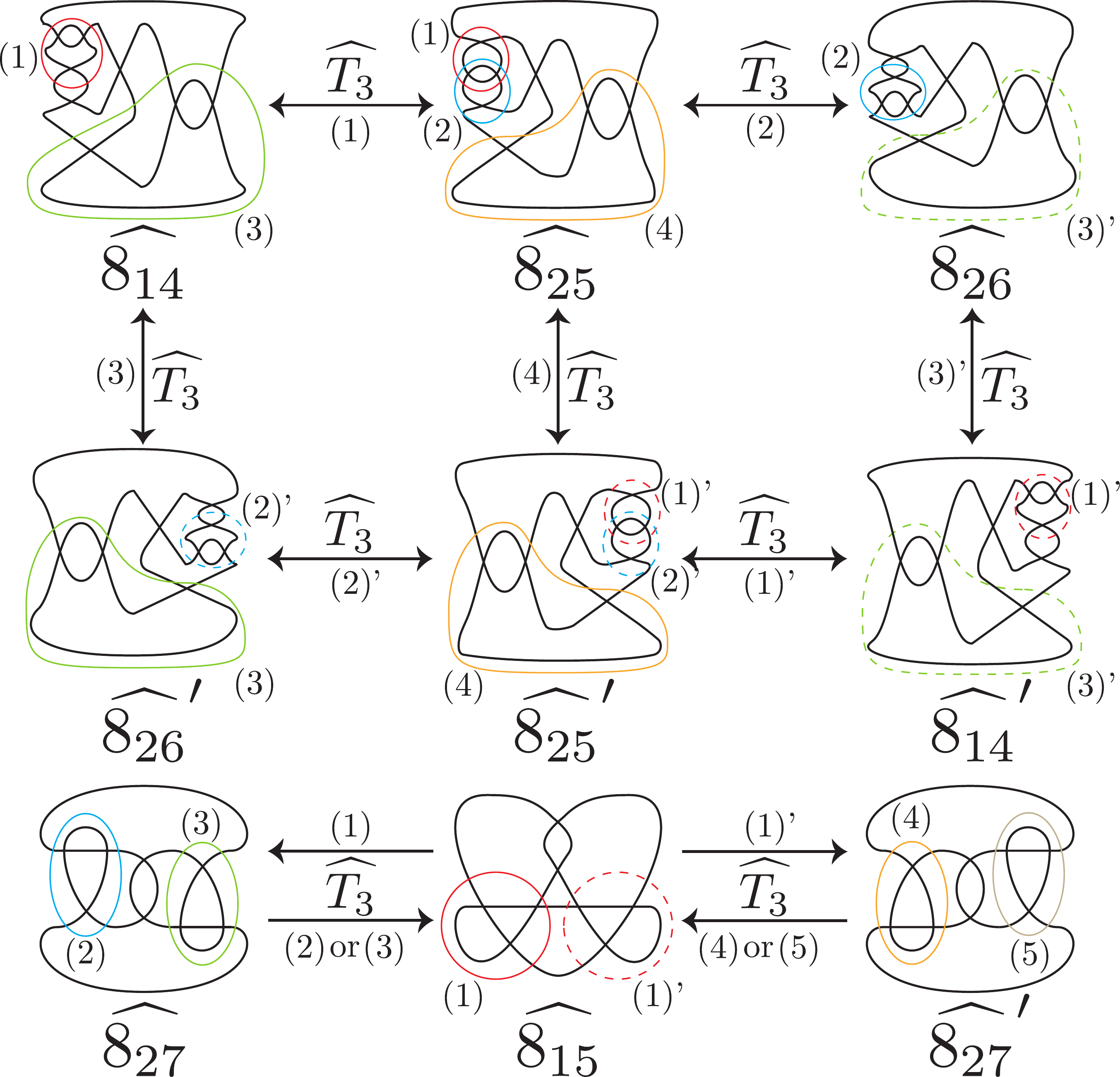}
\caption{The segment indicates a single flype of a crossing across the tangle $\widehat{T_{2}}$ or $\widehat{U_{3}}$.  Each dotted circle denotes one of the other choices of ambient disks.}\label{8flypec}
\end{figure}
Then, $\widehat{8_{23}} \in \mathcal{P}_8$. 
 
Thirdly, we seek a new knot projection $P'''$  obtained from $\widehat{8_{23}}$ by applying a flype of a crossing across the tangle $\widehat{T_{2}}$ or $\widehat{U_{3}}$.  However, there is no such flype for $\widehat{8_{23}}$.  Thus, $\mathcal{P}_8$ $=$ $Alt_8$ $\cup$ $\{ \widehat{8_i}~|~ 19 \le i \le 27 \}$.  Then, we have the complete list $\mathcal{P}_{\le 8}$ that consists of knot projections up to mirror symmetry.  
%This complete the proof of Theorem~\ref{mainthm}.  

\subsection{\textbf Step~3: Assembling mirror images by using arrow diagrams of knot projections}\label{secA}
Recall the definitions of $\mathcal{P}_{\le n}$ and $\mathcal{P}'_{\le n}$ of Definition~\ref{def2}.  
In this section, we recall the definition of arrow diagrams
(Definition~\ref{dfn2_arrow}), which implies a map $\mathcal{P}_{\le n} \cup \mathcal{P}'_{\le n}$ to the set of arrow diagrams (Definition~\ref{dfn_cdp}).    
The map completely detects the difference between a knot projection and its mirror image (Lemma~\ref{prop1}).  
By applying it to $\mathcal{P}_{\le 8}$, we complete the proof of Theorem~\ref{mainthm}, i.e., we have $\mathcal{P}_{\le 8} \cup \mathcal{P}'_{\le 8}$.     
\begin{definition}[arrow diagram]\label{dfn2_arrow} 
An {\it{arrow diagram}} is a configuration of $n$ pair(s) of points up to ambient isotopy and reflection on a circle, where each pair of points consists of a starting point and an end point.    
Traditionally, two points of each pair are connected by a straight arc.  The straight arc is called a \emph{chord}.  Then an assignment of starting and end points on the boundary points of a straight arc is represented by an arrow on the chord from the starting point to the end point.     
\end{definition}
\begin{definition}[an arrow diagram of a knot projection $P$]\label{dfn_cdp}
Let $P$ be a knot projection. Then, there is a generic immersion $g: S^1 \to S^2$ such that $g(S^1)=P$.  
We define an arrow diagram of $P$  as follows (Figure~\ref{def1}). Let $l$ be the number of the double points of $P$.   
\begin{figure}[h!]
\includegraphics[width=8cm]{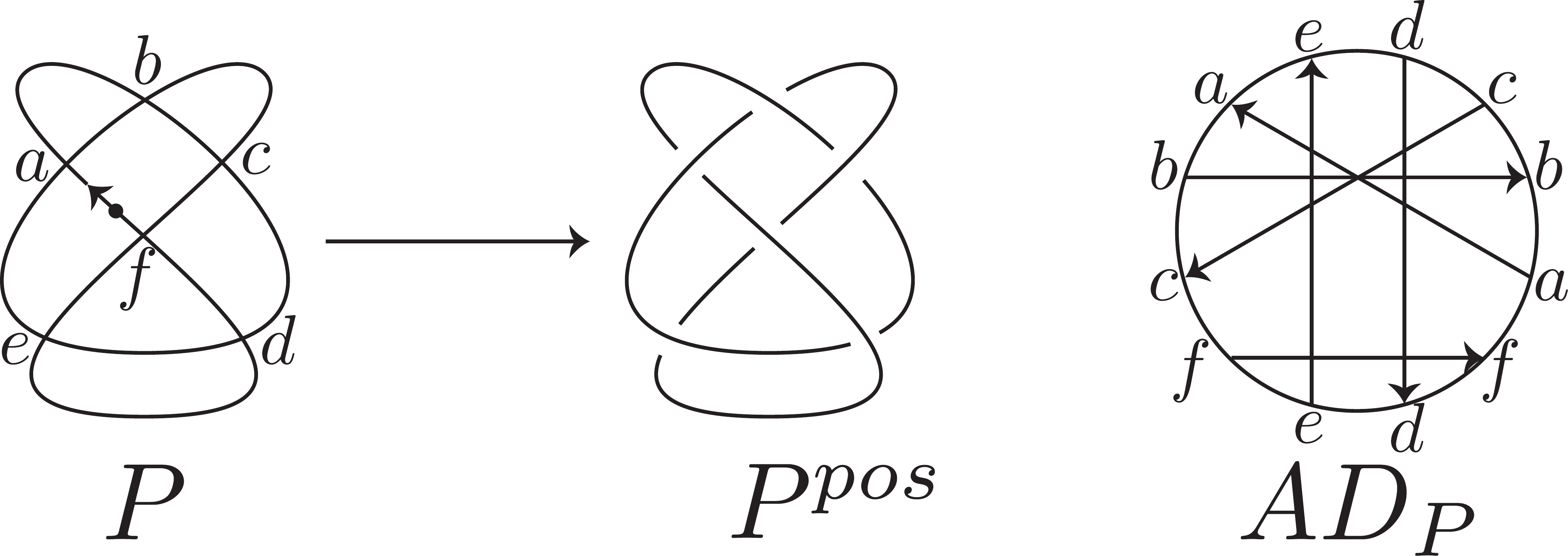}
\caption{An arrow diagram $AD_P$ of a knot projection $P$ via a knot diagram $P^{pos}$.}\label{def1}
\end{figure} 
\begin{figure}[htbp]
\includegraphics[width=6cm]{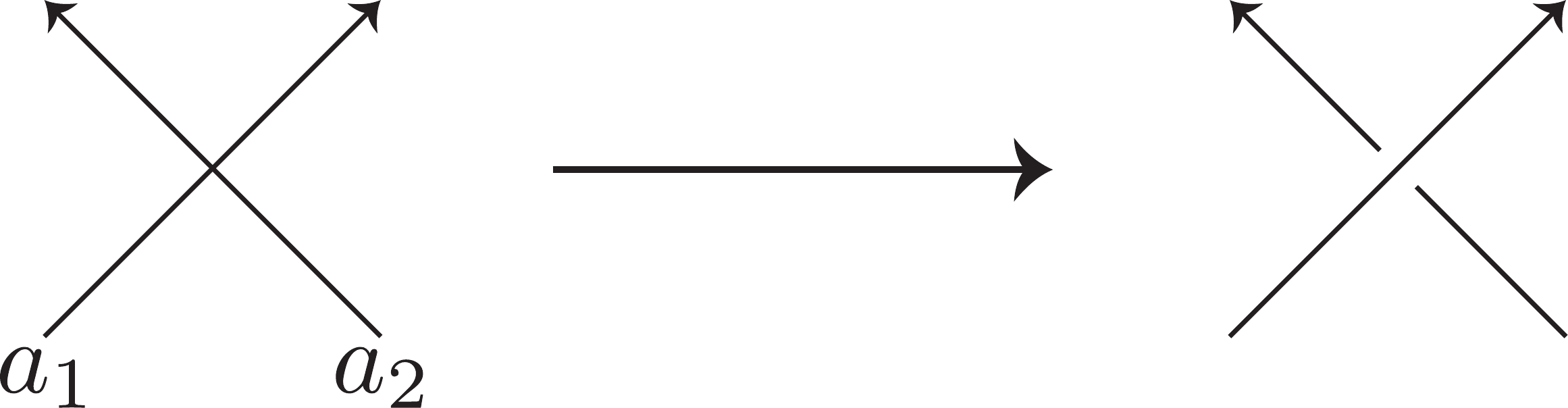}
\caption{A replacement of a double point with a crossing.}\label{positive}
\end{figure}
We fix a base point, which is not a double point on $P$. Then we choose an orientation of $P$.  After we start from the base point, we proceed along $P$ according to the orientation of $P$.    
Assign $1$ to the first double point which we encounter.  Then we assign $2$ to the next double point which we encounter provided it is not the first double point.  Suppose that we have already assigned $1$, $2, \dots, k$.  Then we assign ${k+1}$ to the next double point which we encounter provided it has not been assigned yet.    
Following the same procedure, we finally label the double points of $P$.    
Here, note that $g^{-1}({\text{double point assigned $i$}})$ consists of two points on $S^1$.  
Now we focus on the double point corresponding to the two points.  Suppose that we regard the double point as the left of Figure~\ref{positive}.  
The left of Figure~\ref{positive} consists of two oriented paths, e.g., $a_1$ and $a_2$, where $a_1$ traverses $a_2$ from the left side and $a_2$ traverses $a_1$ from the right side.   Then, we
 connect the label $a_1$ and $a_2$ along the circles with an arrow pointing from $a_1$ to $a_2$, see Figure~\ref{def1}.  
%define that $a_1$ ($a_2$, resp.) represents the starting point (end point, resp.) of the two points corresponding to the double point (traditionally, we often denote the by the right of Figure~\ref{positive}).  
The arrow diagram represented by $g^{-1}({\text{double point assigned $1$}}),$ $g^{-1}({\text{double point assigned $2$}}),$ $\dots,$ $g^{-1}({\text{double point assigned $l$}})$ on $S^1$ is denoted by $AD_P$ and is called {\it{an arrow diagram of the knot projection}} $P$.  Denote by $P^{pos}$ a knot diagram obtained by the replacement, as shown in Figure~\ref{positive}, of each double point (e.g., the center of Figure~\ref{def1}).    
\end{definition}
Note that $AD_P$ does not depend on the base point and thus, it is well-defined up to orientations of $P$.  
Thus, we have Proposition~\ref{prop1}.
\begin{proposition}\label{prop1}
A knot projection $P$ is equivalent to its mirror image $P'$ up to ambient isotopy on a $2$-sphere if and only if $AD_{P}$ and $AD_{P'}$ are equivalent up to ambient isotopy and reflection on a plane.  

Further, $AD_{P'}$ is obtained from $AD_P$ by replacing the orientation of each arrow with the inverse orientation.    
\end{proposition}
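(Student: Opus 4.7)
The plan is first to establish the ``further'' claim about arrow reversal, from which the biconditional will follow almost immediately.

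For the arrow-reversal claim, I would unpack the construction in Definition~\ref{dfn_cdp} for $P$ and $P'$ side-by-side. Using the same base point and the same orientation of $P$ gives identical cyclic labels of the double points, and in particular the underlying chord skeleton on $S^1$ is literally the same. The only orientation-dependent step in the construction is the local replacement in Figure~\ref{positive}, which selects $a_1$ and $a_2$ so that the resulting crossing is positive. A crossing is positive exactly when the over-strand's tangent, rotated by $90^{\circ}$ counterclockwise using the ambient orientation, matches the under-strand's tangent; reversing the ambient orientation of $S^2$ swaps the roles of over and under. Hence at every double point the labels $a_1$ and $a_2$ are interchanged, and the arrow drawn from $a_1$'s preimage to $a_2$'s preimage flips direction. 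This yields $AD_{P'}$ from $AD_P$ by reversing every arrow.

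For the forward direction of the equivalence, I note that an ambient isotopy of $S^2$ taking $P$ to $P'$ is, by convention, orientation-preserving, so it preserves the positive-crossing convention and carries the labeled chord data of $P$ to that of $P'$. Up to the remaining freedom in choosing a base point and an orientation of the parametrizing $S^1$ - which together generate precisely ambient isotopy and reflection on the circle - we obtain the equivalence of $AD_P$ and $AD_{P'}$.

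For the converse I would lean on a standard fact: a realizable arrow diagram determines its underlying generic spherical immersion uniquely up to orientation-preserving ambient isotopy of $S^2$. Reflection of the arrow diagram on the plane corresponds exactly to reversing the orientation of the parametrizing $S^1$ in the immersion $g$, not to reversing the orientation of the ambient $S^2$; so allowing reflection in the equivalence does not alter the underlying knot projection on the oriented $2$-sphere. Given that $AD_P$ and $AD_{P'}$ are equivalent up to ambient isotopy and reflection, realizability then supplies an orientation-preserving ambient isotopy from $P$ to $P'$. The main obstacle I anticipate is purely bookkeeping: carefully distinguishing the three ``orientations'' in play - of $P$ as a curve, of the ambient $S^2$, and of the abstract domain $S^1$ of $g$ - and confirming that only the middle one is what passes to the mirror, while the other two account for the allowed isotopies and reflections of the arrow diagram. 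Once this separation is done, the whole argument reduces to the local check at one positive crossing together with one invocation of Gauss-code realizability.
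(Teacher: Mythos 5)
Your proposal is correct and follows what is essentially the paper's (implicit) argument: the paper offers no written-out proof, deriving the proposition in one line from the remark that $AD_P$ is well-defined up to the orientation of $P$, and your local check that reversing the orientation of $S^2$ swaps the roles of $a_1$ and $a_2$ (hence reverses every arrow) while reversing the orientation of $P$ only reflects the circle is exactly the content behind that remark. The one ingredient you make explicit that the paper leaves tacit is the uniqueness of realization of an arrow diagram by a connected spherical curve up to orientation-preserving ambient isotopy, which is indeed the standard fact needed for the converse direction.
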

By using Proposition~\ref{prop1}, $P$ and its mirror image $P'$ are different if $AD_{P}$ is not $AD_{P'}$ (here, note that $AD_P$ is defined up to ambient isotopy and reflection).      
The table (Tables~\ref{arrow_table1}--\ref{arrow_table3}) of the arrow diagrams $AD_P$ for each $P$ in Tables~\ref{table1} and \ref{table4}.    
\begin{table}[htbp]
\caption{Arrow diagrams corresponding to $\mathcal{P}_{\le 8}$ ($\widehat{1_1}$ to $\widehat{8_6}$).}\label{arrow_table1}
\includegraphics[width=10cm]{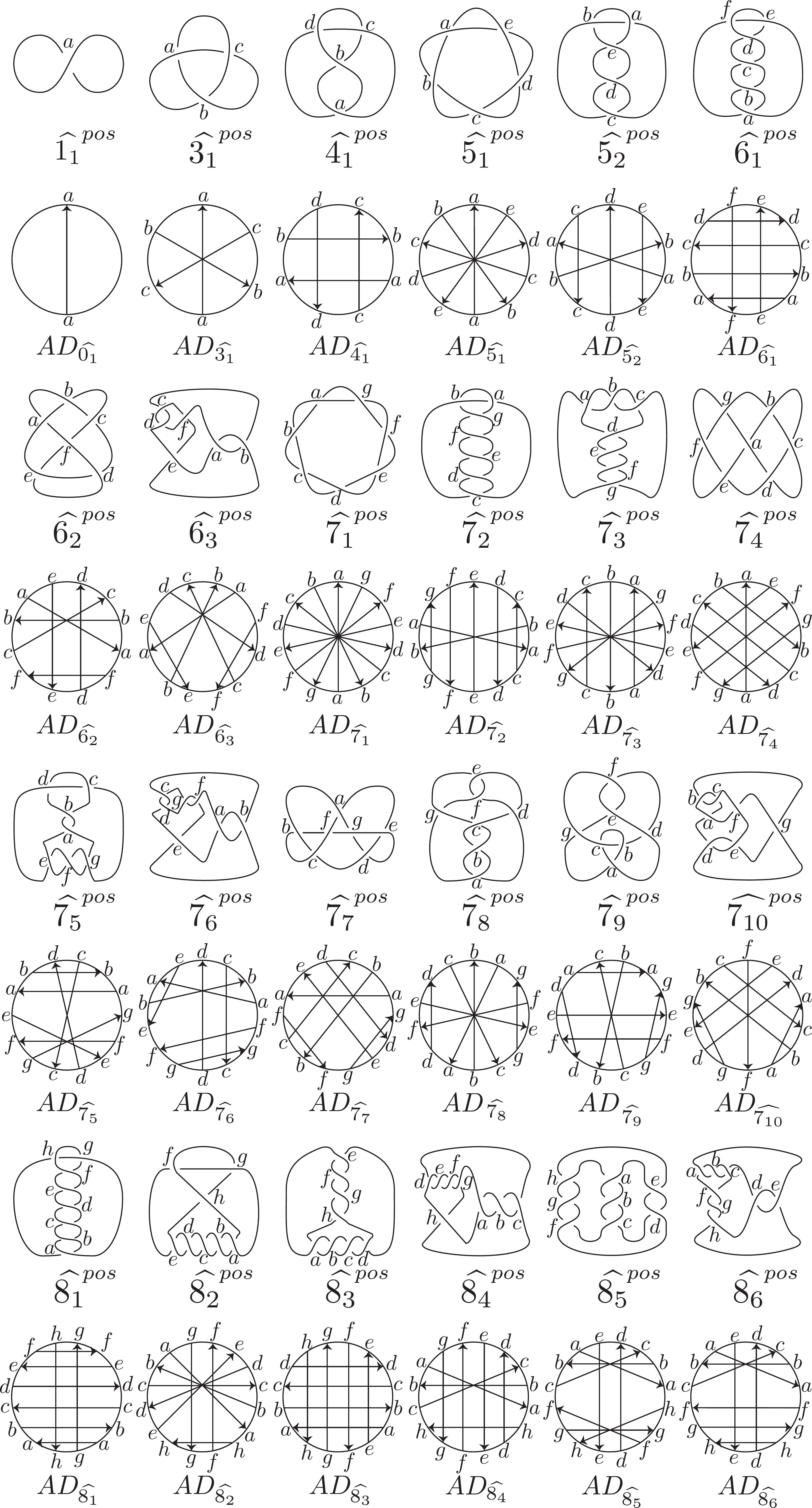}
\end{table} 
\begin{table}[htbp]
\caption{Arrow diagrams corresponding to $\mathcal{P}_{\le 8}$ ($\widehat{8_7}$ to $\widehat{8_{27}}$). }\label{arrow_table2}
\includegraphics[width=10cm]{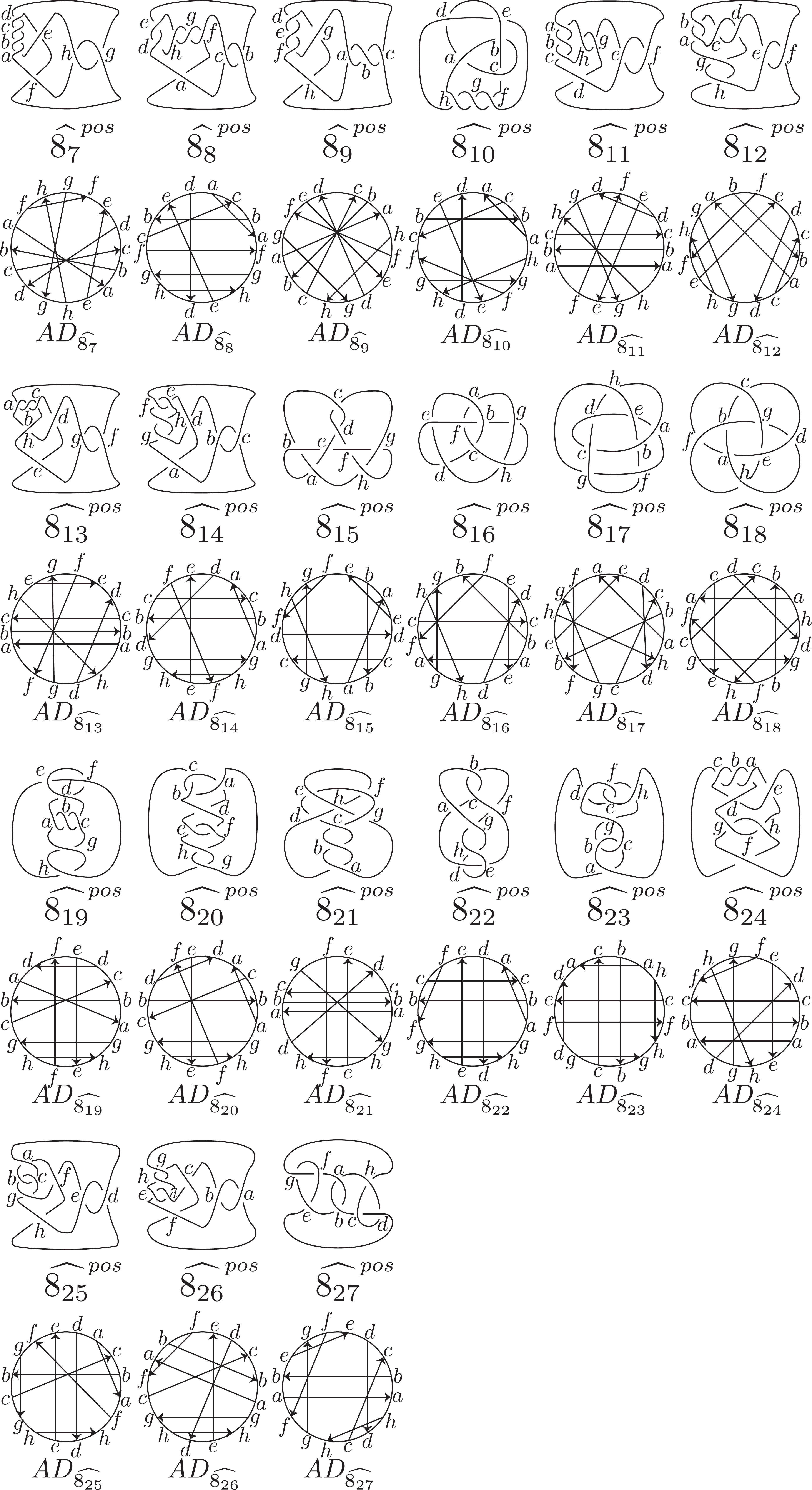}
\end{table} 
\begin{table}[htbp]
\caption{Arrow diagrams corresponding to $\mathcal{P}'_{\le 8} \setminus \mathcal{P}_{\le 8}$.}\label{arrow_table3}
\includegraphics[width=10cm]{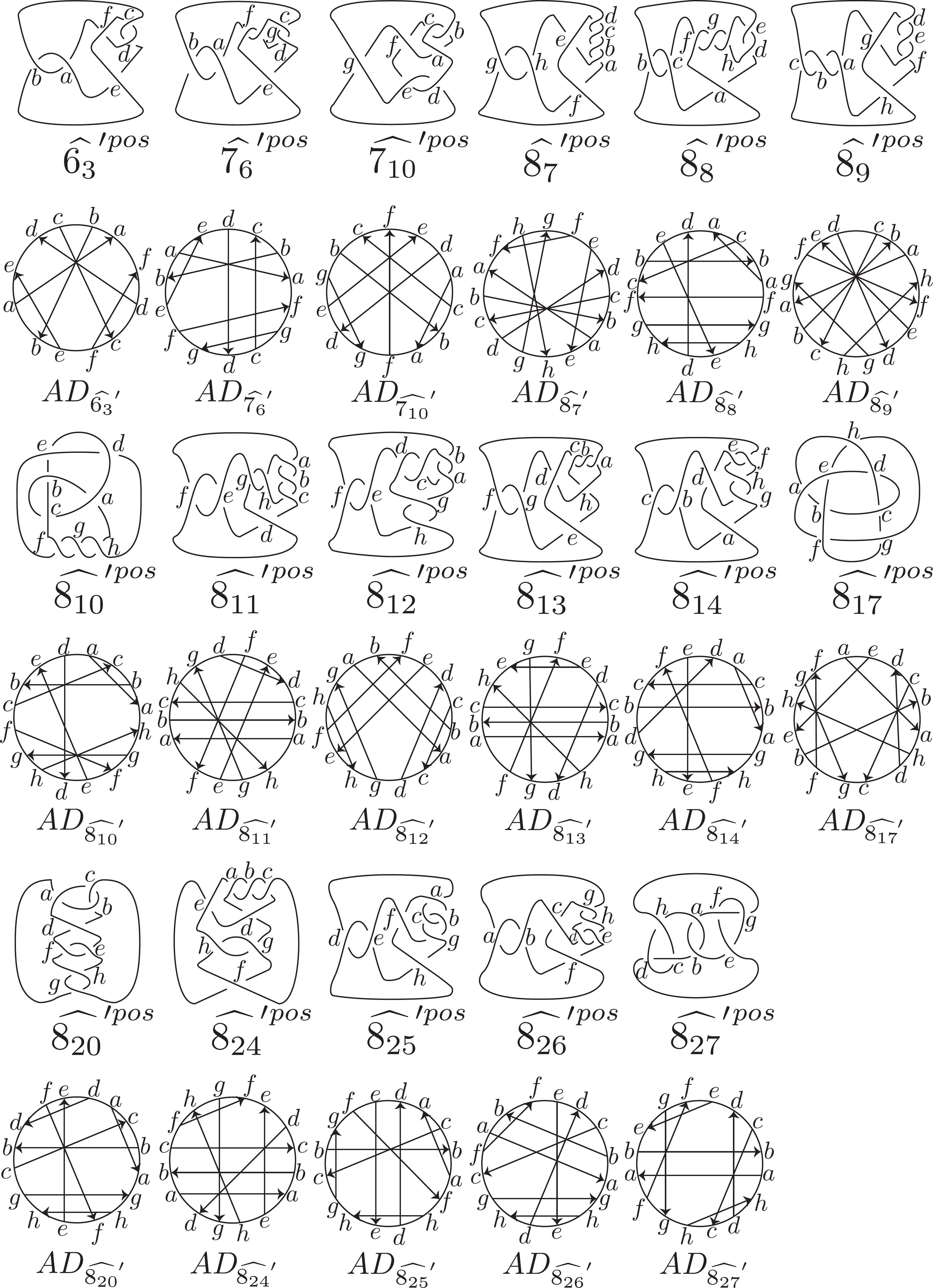}
\end{table}

\section*{\textbf Acknowledgements}
The authors would like to thank the referee for  useful comments.    
This work is partially supported by Sumitomo Foundation (Grant for Basic Science Research Projects, Project number: 160556).  N.~Ito was a project researcher of Grant-in-Aid for Scientific Research~(S) 24224002.

\end{document}